\numberwithin{equation}{section}
\newtheorem{theorem}{Theorem}[section]
\newtheorem{defn}[theorem]{Definition}
\newtheorem{corollary}[theorem]{Corollary}
\newtheorem{definition}[theorem]{Definition}
\newtheorem{example}[theorem]{Example}
\newtheorem{lemma}[theorem]{Lemma}
\newtheorem{proposition}[theorem]{Proposition}
\newtheorem{remark}[theorem]{Remark}
\def\K{\mathbf{K}}
\def\R{\mathbb{R}}
\def\Q{\mathbf{Q}}
\def\P{\mathbf{P}}
\def\mrf{\mathbf{MRF}}
\def\beq{\begin{equation}}
\def\eeq{\end{equation}}
\def\y{\mathbf{y}}
\email{rida.laraki@polytechnique.edu}
\email{lasserre@laas.fr}
\thanks{We would like to thank Bernhard von Stengel and the referees for their comments. The work of J.B. Lasserre was supported by the (French) ANR under
grant NT$05-3-41612$.}
\begin{document}
\title{Semidefinite Programming for Min-Max Problems and
Games}
\author{R. Laraki}
\address{CNRS, Laboratoire d'Econom\'etrie de l'Ecole Polytechnique,
91128 Palaiseau-Cedex, France}
\author{J.B. Lasserre}
\address{LAAS-CNRS and Institute of Mathematics, University of Toulouse,
LAAS, 7 avenue du Colonel Roche, 31077 Toulouse C\'edex 4, France}
\keywords{$N$-player games; Nash equlibria; min-max optimization problems;
semidefinite programming}

\begin{abstract}
We consider two min-max problems: (1) minimizing the
supremum of finitely many rational functions over a compact basic
semi-algebraic set and (2) solving a 2-player zero-sum
polynomial game in randomized strategies with compact basic
semi-algebraic sets of pure strategies. In both problems the optimal value
can be approximated by solving a hierarchy of semidefinite relaxations, in
the spirit of the moment approach developed in Lasserre \cite%
{lasserre1,lasserre2}. This provides a unified approach and a class of
algorithms to compute Nash equilibria and min-max strategies of several
static and dynamic games. Each semidefinite relaxation can be solved in time
which is polynomial in its input size and practice on a sample of  experiments
reveals that few relaxations are needed for a good approximation
(and sometimes even for finite convergence), a behavior similar to what was observed in
polynomial optimization.
\end{abstract}

\maketitle


\section{Introduction}
Initially, this paper was motivated by developing a unified methodology for
solving several types of (neither necessarily finite nor zero-sum) $N$-player games.
But it is also of self-interest in optimization for minimizing the maximum of finitely many rational functions
(whence min-max) on a compact basic semi-algebraic set.
Briefly, the moment-s.o.s. approach developed in \cite{lasserre1,lasserre2} is extended
to two large classes of min-max problems. This allows
to obtain a new numerical scheme based on semidefinite programming to
compute approximately and sometimes exactly (1) Nash equilibria and the
min-max of any finite game\footnote{%
Games with finitely many players where the set of pure actions of each
player is finite.} and (2) the value and the optimal strategies of any
polynomial two-player zero-sum game\footnote{%
Games with two players where the set of pure actions of each
player is a compact basic semi-algebraic-set and the payoff function is
polynomial.}. In particular, the approach can be applied to the so-called
Loomis games defined in \cite{Loomis} and to some dynamic games described in Kolhberg \cite{Kohlberg}.

\subsection*{Background.}

Nash equilibrium \cite{Nash 1} is a central concept in non-cooperative game
theory. It is a profile of mixed strategies (a strategy for each player)
such that each player is best-responding to the strategies of the opponents.
An important problem is to compute numerically a Nash equilibrium, an approximate Nash equilibrium for a given precision or all
Nash equilibria in mixed strategies of a finite game.

It is well known that any two-player \textit{zero-sum finite} game (in mixed strategies) is
reducible to a linear program (Dantzig \cite{Dantzig}) and hence equilibria could be
computed in polynomial time (Khachiyan \cite{Khachiyan}).

Lemke and Howson \cite{Lemke} provided a famous algorithm that computes a
Nash-equilibrium (in mixed strategies) of any $2$-player \textit{non-zero-sum} finite game. The
algorithm has been extended to $N$-player finite games by Rosenm\"{u}ller
\cite{Rosenmuller}, Wilson \cite{Wilson} and Govindan and Wilson \cite%
{GovindanWilson}.

An alternative to the Lemke-Howson algorithm for 2-player games is provided
in van den Elzen and Talman \cite{ElzenTalman} and has been extended to $n$%
-player games by Herings and van den Elzen \cite{HeringsElzen2002}. As shown
in the recent survey of Herings and Peeters \cite{HeringsPeeters2009}, all
these algorithms (including the Lemke-Howson) are homotopy-based and converge (only) when the game is non-degenerate.

Recently, Savani and von Stengel \cite{Savani} proved that the Lemke-Howson
algorithm for 2-player games may be exponential. One may expect that this
result extends to all known homotopy methods. Daskalakis, Goldberg and
Papadimitriou \cite{DaskalakisGoldbergPapa} proved that solving numerically
3-player finite games is hard\footnote{%
More precisely, it is complete in the PPAD class of all search problems that
are guaranteed to exist by means of a direct graph argument. This class was
introduced by Papadimitriou in \cite{Papadimitriou} and is between $P$ and $NP$.%
}. The result has been extended to 2-player finite games by Chen and Deng
\cite{DengChen2006}. Hence, computing a Nash equilibrium is as hard as finding a Brouwer-fixed point. For a recent and deep survey on the complexity of Nash equilibria see Papadimitriou \cite{Papadimitriou1}. For the complexity of computing equilibria on game theory in general, see \cite{Nisan} and \cite{Roughgarden2009}.

A different approach to solve the problem is to view the set of Nash
equilibria as the set of real nonnegative solutions to a system of
polynomial equations. Methods of computational algebra (e.g. using Gr\"{o}%
bner bases) can be applied as suggested and studied in e.g. Datta \cite%
{datta}, Lipton \cite{lipton} and Sturmfels \cite{bernd}. However, in this
approach, one first computes \textit{all} complex solutions to sort out all
real nonnegative solutions afterwards. Interestingly, polynomial equations
can also be solved via homotopy-based methods (see e.g. Verschelde \cite{Ve}).

Another important concept in game theory is the min-max payoff of some player $i$, $\underline{v}%
_{i}$. This is the level at which the team of players (other than $i$) can punish player $i$.
The concept plays an important role in repeated games and the famous folk theorem of Aumann and Shapley \cite{AumannShapley}.
To our knowledge, no algorithm in the literature deals with this problem. However, it has been proved recently that computing the min-max for 3 or more player games is NP-hard \cite{Borgs}.

The algorithms described above concern finite games. In the class of
polynomial games introduced by Dresher, Karlin and Shapley (1950), the set
of pure strategies $S^{i}$ of player $i$ is a product of real intervals and
the payoff function of each player is polynomial. When the game is zero-sum
and $S^{i}=[0,1]$ for each player $i=1,2$, Parrilo \cite{Parrilo} showed that
finding an optimal solution is equivalent to solving a single semidefinite
program. In the same framework but with several players, Stein, Parrilo and Ozdaglar \cite{SteinParriloOzdaglar} propose several algorithms to compute correlated equilibria,
one among them using SDP relaxations.  Shah and Parrilo \cite{Shah} extended the methodology in \cite{Parrilo} to discounted zero-sum stochastic games in which the transition is controlled by one of
the two players. Finally, it is worth noticing recent algorithms designed to
solve some specific classes of infinite games (not necessarily polynomial). For instance, G\"{u}rkan and
Pang \cite{Gurkan}.

\subsection*{Contribution}

In the first part we consider what we call the $\mathbf{MRF}$ problem
which consists of minimizing the supremum of finitely many \textit{rational functions} over a
compact basic semi-algebraic set. In the spirit of the moment approach
developed in Lasserre \cite{lasserre1,lasserre2} for polynomial
optimization, we define a hierarchy of semidefinite relaxations (in short
SDP relaxations). Each SDP relaxation is a semidefinite program
which, up to arbitrary (but fixed) precision,
can be solved in polynomial time
and the monotone sequence of optimal
values associated with the hierarchy converges to the optimal value of $%
\mathbf{MRF}$. Sometimes the convergence is finite and a sufficient
condition permits to detect whether a certain relaxation in the hierarchy is
exact (i.e. provides the optimal value), and to extract optimal solutions.
It is shown that computing the min-max or a Nash equilibrium in mixed
strategies for static finite games or dynamic finite absorbing games reduces
to solving an $\mathbf{MRF}$ problem.
For zero-sum finite games in mixed strategies
the hierarchy of SDP relaxations for the associated {\bf MRF} reduces to the first one of the hierarchy, which in turn reduces to a linear program. This
is in support of the claim that the above {\bf MRF} formulation is a natural extension
to the non linear case of the well-known LP-approach \cite{Dantzig} as it reduces to the latter
for finite zero-sum games. In
addition, if the SDP solver uses primal-dual interior points methods and if
the convergence is finite then the algorithm returns \textit{all} Nash
equilibria (if of course there are finitely many).

To compute all Nash equilibria, homotopy algorithms are developed in Kostreva
and Kinard \cite{KostrevaKinard} and Herings and Peeters \cite%
{HeringsPeeters2005}. They apply numerical techniques to obtain all solutions of a
system of polynomial equations.

In the second part, we consider general $2$-player zero-sum polynomial games in mixed strategies
(whose action sets are basic compact semi-algebraic sets of $\mathbb{R}^{n}$
and payoff functions are polynomials). We show that the common value of
max-min and min-max problems can be
approximated as closely as desired, again by solving a certain hierarchy of
SDP relaxations. Moreover, if a certain rank condition is satisfied
at an optimal solution of some relaxation of the hierarchy, then this relaxation is exact
and one may extract optimal strategies.
Interestingly and not surprisingly, as this hierarchy is
derived from a min-max problem over sets of measures, it is a subtle
combination of moment and sums of squares (s.o.s.) constraints whereas the hierarchy for
polynomial optimization is based on either moments (primal formulation)
or s.o.s. (dual formulation) but not both. This is a multivariate extension of
Parrilo's \cite{Parrilo} result for the univariate case where one needs to
solve a single semidefinite program (as opposed to a hierarchy). The
approach may be extended to dynamic absorbing games\footnote{%
In dynamic absorbing games, transitions are controlled by \textit{both}
players, in contrast with Parrilo and Shah \cite{Shah} where only one player
controls the transitions.} with discounted rewards, and in the univariate
case, one can construct a polynomial time algorithm that combines a
binary search with a semidefinite program.

Hence the first main contribution is to
formulate several game problems as a particular instance of the $\mathbf{MRF%
}$ problem while the second main contribution extends the moment-s.o.s. approach of \cite%
{lasserre1,lasserre2} in two directions. The first extension (
the {\bf MRF} problem) is essentially a non trivial adaptation of Lasserre's
approach \cite{lasserre1} to the problem of minimizing
the sup of finitely many rational functions. Notice that the sup of finitely many rational functions is {\it not} a rational function. However one reduces
the initial problem to that of minimizing a {\it single} rational function
(but now of $n+1$ variables) on an appropriate set of $\R^{n+1}$.
As such, this can also be viewed as an
extension of Jibetean and De Klerk's result \cite{jibetean}
for minimizing a single rational function.
The second extension generalizes (to the multivariate case)
Parrilo's approach \cite{Parrilo} for the univariate case,
and provides a hierarchy of mixed moment-s.o.s. SDP relaxations.
The proof of convergence is delicate as
one has to consider simultaneously moment constraints as well
as s.o.s.-representation of positive polynomials. (In particular, and in contrast to
polynomial optimization, the converging
sequence of optimal values associated with the hierarchy of SDP relaxations
is {\em not} monotone anymore.)

To conclude, within the game theory community the rather negative
computational complexity results (\cite{Borgs}, \cite{DaskalakisGoldbergPapa}, \cite{DengChen2006}, \cite{Roughgarden2009}, \cite{Savani}) have
reinforced the idea that solving a game is computationally hard. On a more
positive tone our contribution provides a unified semidefinite programming
approach to many game problems. It shows that optimal value and optimal strategies
can be approximated as closely as desired (and sometimes obtained exactly)
by solving a hierarchy of semidefinite relaxations, very much in the spirit
of the moment approach described in \cite{lasserre1} for solving polynomial
minimization problems (a particular instance of the Generalized Problem of
Moments \cite{lasserre2}). Moreover, the methodology is consistent with
previous results of \cite{Dantzig} and \cite{Parrilo} as it reduces to a linear
program for finite zero-sum games and to a single semidefinite program for
univariate polynomial zero-sum games.

Finally, even if practice from problems in polynomial optimization seems to reveal that this
approach is efficient, of course the size of the semidefinite relaxations
grows rapidly with the initial problem size. Therefore, in view of the
present status of public SDP solvers available, its application is limited
to small and medium size problems so far.

Quoting Papadimitriou \cite{Papadimitriou1}: ``\emph{The PPAD-completeness of Nash suggests that any approach to finding Nash equilibria that aspires to be efficient [...] should explicitly take advantage of computationally beneficial special properties of the game in hand}''.
Hence to make our algorithm efficient for larger size problems, one could exploit possible sparsity and regularities often present in the data (which will be the case if the game is the normal form of an extensive form game). Indeed specific SDP relaxations for minimization problems that exploit sparsity efficiently have been provided in Kojima et al. \cite{waki} and their convergence has been proved in \cite{sparse} under some condition on the sparsity pattern.

\section{Notation and preliminary results}

\subsection{Notation and definitions}

Let $\mathbb{R}[x]$ be the ring of real polynomials in the variables $%
x=(x_1,\ldots,x_n)$ and let $(X^\alpha)_{\alpha\in\mathbb{N}}$ be its
canonical basis of monomials. Denote by $\Sigma[x]\subset\mathbb{R}[x]$ the
subset (cone) of polynomials that are sums of squares (s.o.s.), and by $%
\mathbb{R}[x]_d$ the space of polynomials of degree at most $d$. Finally let
$\Vert x\Vert$ denote the Euclidean norm of $x\in\R^n$.

With $\mathbf{y}=:(y_\alpha)\subset\mathbb{R}$ being a sequence indexed in
the canonical monomial basis $(X^\alpha)$, let $L_\mathbf{y}:\mathbb{R}[x]\to%
\mathbb{R}$ be the linear functional
\begin{equation*}
f\:(=\sum_{\alpha\in\mathbb{N}^n}f_\alpha\,x^\alpha)\longmapsto\:
\sum_{\alpha\in\mathbb{N}^n}f_\alpha\,y_\alpha,\quad f\in\mathbb{R}[x].
\end{equation*}

\subsection*{Moment matrix}

Given $\mathbf{y}=(y_\alpha)\subset\mathbb{R}$, the \textit{moment} matrix $%
M_d(\mathbf{y})$ of order $d$ associated with $\mathbf{y}$, has its rows and
columns indexed by $(x^\alpha)$ and its $(\alpha,\beta)$-entry is defined
by:
\begin{equation*}
M_d(\mathbf{y})(\alpha,\beta)\,:=\,L_\mathbf{y}(x^{\alpha+\beta})\,=\,y_{%
\alpha+\beta},\qquad\vert\alpha\vert,\,\vert\beta\vert\,\leq d.
\end{equation*}

\subsection*{Localizing matrix}

Similarly, given $\mathbf{y}=(y_\alpha)\subset\mathbb{R}$ and $\theta\in%
\mathbb{R}[x] \,(=\sum_\gamma\theta_\gamma x^\gamma$), the \textit{localizing%
} matrix $M_d(\theta, \mathbf{y})$ of order $d$ associated with $\mathbf{y}$
and $\theta$, has its rows and columns indexed by $(x^\alpha)$ and its $%
(\alpha,\beta)$-entry is defined by:
\begin{equation*}
M_d(\theta ,\mathbf{y})(\alpha,\beta):=L_\mathbf{y}(x^{\alpha+\beta}%
\theta(x))=
\sum_{\gamma}\theta_\gamma y_{\gamma+\alpha+\beta},\qquad\vert\alpha\vert,%
\vert\beta\vert\,\leq d.
\end{equation*}
One says that $\mathbf{y}=(y_\alpha)\subset\mathbb{R}$ has a \textit{%
representing} measure supported on $\mathbf{K}$ if there is some finite
Borel measure $\mu$ on $\mathbf{K}$ such that
\begin{equation*}
y_\alpha\,=\,\int_\mathbf{K} x^\alpha\,d\mu(x),\qquad \forall\,\alpha\in%
\mathbb{N}^n.
\end{equation*}
For later use, write
\begin{eqnarray}  \label{local}
M_d(\mathbf{y})&=&\sum_{\alpha\in\mathbb{N}^n}y_\alpha\,B_\alpha \\
M_d(\theta,\mathbf{y})&=&\sum_{\alpha\in\mathbb{N}^n}y_\alpha\,B^{\theta}_%
\alpha,
\end{eqnarray}
for real symmetric matrices $(B_\alpha,B^{\theta}_\alpha)$ of appropriate
dimensions. Note that the above two summations contain only finitely many terms.

\begin{defn}[Putinar's property]
\label{defput} \textrm{Let $(g_{j})_{j=1}^{m}\subset \mathbb{R}[x]$. A basic
closed semi algebraic set $\mathbf{K}:=\{x\in \mathbb{R}^{n}:g_{j}(x)\geq
0,:j=1,\ldots ,m\}$ satisfies Putinar's property if there exists $u\in
\mathbb{R}[x]$ such that $\{x:u(x)\geq 0\}$ is compact and
\begin{equation}
u\,=\,\sigma _{0}+\sum_{j=1}^{m}\sigma _{j}\,g_{j}  \label{putrep}
\end{equation}%
for some s.o.s. polynomials $(\sigma _{j})_{j=0}^{m}\subset \Sigma \lbrack x]$. Equivalently,
for some $M>0$ the quadratic polynomial $x\mapsto M-\Vert x\Vert ^{2}$ has
Putinar's representation (\ref{putrep}).}
\end{defn}

Obviously Putinar's property implies compactness of $\mathbf{K}$. However,
notice that Putinar's property is not geometric but algebraic as it is
related to the representation of $\mathbf{K}$ by the defining polynomials $%
(g_{j})$'s. Putinar's property holds if e.g. the level set $\{x:g_{j}(x)\geq
0\}$ is compact for some $j$, or if
all $g_{j}$ are affine and $\mathbf{K}$  is compact (in which case it is a polytope). In case it is not satisfied and if for some known $M>0$%
, $\Vert x\Vert ^{2}\leq M$ whenever $x\in \mathbf{K}$, then it suffices to
add the redundant quadratic constraint $g_{m+1}(x):=M-\Vert x\Vert ^{2}\geq
0 $ to the definition of $\mathbf{K}$. The importance of Putinar's property
stems from the following result:

\begin{theorem}[Putinar \protect\cite{putinar}]
\label{thput} Let $(g_{j})_{j=1}^{m}\subset \mathbb{R}[x]$ and assume that
\begin{equation*}
\mathbf{K}:=\{\,x\in \mathbb{R}^{n}:g_{j}(x)\geq 0,\quad j=1,\ldots ,m\}
\end{equation*}%
satisfies Putinar's property.

\textrm{(a)} Let $f\in \mathbb{R}[x]$ be strictly positive on $\mathbf{K}$. Then $f$
can be written as $u$ in (\ref{putrep}).

\textrm{(b)} Let $\mathbf{y}=(y_{\alpha })$. Then $\mathbf{y}$ has a
representing measure on $\mathbf{K}$ if and only if
\begin{equation}
M_{d}(\mathbf{y})\succeq 0,\quad M_{d}(g_{j},\mathbf{y})\succeq 0,\qquad
j=1,\ldots ,m;\quad d=0,1,\ldots  \label{put-moments}
\end{equation}
\end{theorem}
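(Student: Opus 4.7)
The plan is to prove part (a) (Putinar's Positivstellensatz) by a Hahn--Banach separation followed by a GNS-type spectral reconstruction of a measure, and to deduce part (b) from (a) via Haviland's theorem. The central algebraic object is the quadratic module $M(g) := \{\sigma_0 + \sum_{j=1}^{m} \sigma_j g_j : \sigma_j \in \Sigma[x]\} \subset \R[x]$. A preliminary lemma---which I would import from Putinar's original paper---upgrades Putinar's property to the strong Archimedean condition: for every $p \in \R[x]$ there exists $N \in \N$ with $N \pm p \in M(g)$. Obtaining this upgrade from the hypothesis $R - \|x\|^2 \in M(g)$ is a delicate algebraic induction, and is itself the first non-trivial step.

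For part (a), suppose for contradiction that $f > 0$ on $\K$ yet $f \notin M(g)$. Since $M(g)$ is a convex cone in $\R[x]$, Hahn--Banach applied in the finite-dimensional subspace $\R[x]_{d}$ with $d = \deg f$ yields a nonzero linear functional $L : \R[x] \to \R$ with $L \geq 0$ on $M(g)$, $L(1) = 1$ and $L(f) \leq 0$. The goal is then to produce a probability measure $\mu$ on $\K$ representing $L$, so that $L(f) = \int_\K f\,d\mu > 0$ yields the contradiction. I would achieve this via a GNS construction: the positive semidefinite bilinear form $(p,q) \mapsto L(pq)$ descends to an inner product on $\R[x]/\{p : L(p^2) = 0\}$, on which each multiplication operator $M_{x_i}$ is symmetric and satisfies the bound $\|M_{x_i}\|^2 \leq N$ (coming from $L(p^2(N - x_i^2)) \geq 0$). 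The commuting self-adjoint closures of $M_{x_1},\ldots,M_{x_n}$ then admit a joint spectral resolution $E$, and $\mu(B) := \langle E(B)\,\mathbf{1},\,\mathbf{1}\rangle$ is a probability measure representing $L$; the localizing positivity $L(\sigma^2 g_j) \geq 0$ for $\sigma \in \R[x]$ forces $\mathrm{supp}(\mu) \subset \{g_j \geq 0\}$ for each $j$, so $\mathrm{supp}(\mu) \subset \K$.

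For part (b) the ``only if'' direction is an immediate computation: if $y_\alpha = \int_\K x^\alpha\,d\mu$ then for every coefficient vector $v = (v_\alpha)$
\begin{equation*}
v^\top M_d(\y)\,v \;=\; \int_\K \Bigl(\textstyle\sum_\alpha v_\alpha x^\alpha\Bigr)^2 d\mu \;\geq\; 0,
\end{equation*}
and similarly $v^\top M_d(g_j,\y)\,v \geq 0$ because $g_j \geq 0$ on $\K$. Conversely, the hypotheses (\ref{put-moments}) are exactly equivalent to $L_\y(q) \geq 0$ for every $q \in M(g)$. For any $p \in \R[x]$ nonnegative on $\K$ and any $\epsilon > 0$, the polynomial $p + \epsilon$ is strictly positive on $\K$, hence by part (a) lies in $M(g)$; therefore $L_\y(p) + \epsilon\,y_0 \geq 0$, and letting $\epsilon \downarrow 0$ gives $L_\y(p) \geq 0$ whenever $p \geq 0$ on $\K$. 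Haviland's theorem (applicable because $\K$ is compact) then delivers a Borel measure $\mu$ on $\K$ with $y_\alpha = \int_\K x^\alpha\,d\mu$ for every $\alpha$.

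The principal obstacle is the GNS/spectral step inside part (a): the strong Archimedean form of Putinar's property is what bounds the multiplication operators $M_{x_i}$ and thereby renders them amenable to the joint spectral theorem for commuting self-adjoint operators, while the $\Sigma[x]\cdot g_j$-positivity of $L$ is precisely what localizes the joint spectrum to $\K$. The Hahn--Banach separation and the reduction of (b) to (a) are routine by comparison; it is this pairing of Archimedean boundedness with localizing positivity that distinguishes Putinar's result from the earlier non-Archimedean Positivstellens\"atze and makes the measure-reconstruction argument go through.
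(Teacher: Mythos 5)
The paper does not prove this statement: Theorem \ref{thput} is quoted as a known result and attributed entirely to Putinar \cite{putinar}, so there is no internal proof to compare yours against. What you have written is, in outline, the classical argument for that external result: the Archimedean upgrade of the quadratic module $M(g)$ from the hypothesis $R-\Vert x\Vert^2\in M(g)$, a separation argument producing a linear functional $L$ nonnegative on $M(g)$ with $L(f)\leq 0$, the GNS/joint-spectral-theorem reconstruction of a representing measure supported on $\mathbf{K}$ (with the Archimedean bound giving boundedness of the multiplication operators and the localizing positivity $L(\sigma^2 g_j)\geq 0$ pinning the joint spectrum inside $\{g_j\geq 0\}$), and the deduction of (b) from (a) via $p+\epsilon\in M(g)$ and Haviland/Riesz. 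That is exactly how the cited theorem is proved in the literature, and your identification of the Archimedean-plus-localizing pairing as the crux is accurate.

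One step is stated too loosely to stand as written: you claim that Hahn--Banach ``applied in the finite-dimensional subspace $\mathbb{R}[x]_d$ with $d=\deg f$'' yields a functional $L$ on all of $\mathbb{R}[x]$ nonnegative on all of $M(g)$ with $L(1)=1$. Separating $f$ from $M(g)\cap\mathbb{R}[x]_d$ inside $\mathbb{R}[x]_d$ only gives a functional on $\mathbb{R}[x]_d$ nonnegative on the truncated cone, and neither the extension to the full polynomial ring (preserving nonnegativity on all of $M(g)$) nor the normalization $L(1)=1$ (which requires ruling out $L(1)=0$) is automatic. The standard fixes are either to separate in the whole space $\mathbb{R}[x]$ equipped with the finest locally convex topology, using that $1$ is an algebraic interior point of $M(g)$ (a consequence of the Archimedean property, via Eidelheit's theorem), or to extend the truncated functional by an M.~Riesz-type argument again driven by $N\pm p\in M(g)$. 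Since you explicitly import the Archimedean lemma and the rest of the architecture is sound, this is an imprecision in a sketch rather than a wrong turn, but it is the one place where the written argument does not yet close.
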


We also have:

\begin{lemma}
\label{lemm-frac} Let $\mathbf{K}\subset \mathbb{R}^{n}$ be compact and let $%
p,q$ continuous with $q>0$ on $\mathbf{K}$. Let $M(\mathbf{K})$ be the set
of finite Borel measures on $\mathbf{K}$ and let $P(\mathbf{K})\subset M(%
\mathbf{K})$ be its subset of probability measures on $\mathbf{K}$. Then
\begin{eqnarray}
\min_{\mu \in P(\mathbf{K})}\frac{\int_\K p\,d\mu }{\int_\K q\,d\mu }
&=&\min_{\varphi \in M(\mathbf{K})}\{\int_\K p\,d\varphi :\int_\K q\,d\varphi
\,=\,1\} \\
&=&\min_{\mu \in P(\mathbf{K})}\int_\K \frac{p}{q}\,d\mu \,=\,\min_{x\in
\mathbf{K}}\frac{p(x)}{q(x)}
\end{eqnarray}
\end{lemma}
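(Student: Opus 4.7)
The plan is to label the four expressions in the displayed chain as $v_{1},v_{2},v_{3},v_{4}$ (left to right) and verify the three equalities pairwise. The crucial structural fact that I would use repeatedly is that, since $\K$ is compact and $q$ is continuous and strictly positive on $\K$, the number $q_{\min}:=\min_{x\in\K}q(x)$ is strictly positive; in particular $p/q$ is continuous on $\K$ and attains its infimum, at some $x^{*}\in\K$, so that $v_{4}=p(x^{*})/q(x^{*})$.

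First I would prove $v_{3}=v_{4}$. The Dirac measure $\delta_{x^{*}}\in P(\K)$ gives $\int_{\K}(p/q)\,d\delta_{x^{*}}=v_{4}$, so $v_{3}\le v_{4}$; and since $p/q\ge v_{4}$ pointwise on $\K$, integrating against any $\mu\in P(\K)$ yields $v_{3}\ge v_{4}$. The same Dirac measure also delivers $v_{1}\le v_{4}$; for the reverse, the pointwise inequality $p(x)\ge v_{4}\,q(x)$ on $\K$, integrated against $\mu\in P(\K)$ and divided by $\int_{\K}q\,d\mu\ge q_{\min}>0$, yields $\int_{\K}p\,d\mu/\int_{\K}q\,d\mu\ge v_{4}$. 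Thus $v_{1}=v_{3}=v_{4}$.

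For $v_{1}=v_{2}$ I would set up the natural rescaling bijection. Given $\mu\in P(\K)$, set $c:=\int_{\K}q\,d\mu\ge q_{\min}>0$ and $\varphi:=\mu/c\in M(\K)$; then $\int_{\K}q\,d\varphi=1$ and $\int_{\K}p\,d\varphi=\int_{\K}p\,d\mu/\int_{\K}q\,d\mu$, so $v_{2}\le v_{1}$. Conversely, given $\varphi\in M(\K)$ with $\int_{\K}q\,d\varphi=1$, the total mass $\varphi(\K)$ is strictly positive because $1=\int_{\K}q\,d\varphi\le (\max_{\K}q)\cdot\varphi(\K)$; then $\mu:=\varphi/\varphi(\K)\in P(\K)$ satisfies $\int_{\K}p\,d\mu/\int_{\K}q\,d\mu=\int_{\K}p\,d\varphi$, giving $v_{1}\le v_{2}$.

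There is no genuinely hard step: the only care needed is to check that the normalizing constants $\int_{\K}q\,d\mu$ and $\varphi(\K)$ are nonzero, and both follow immediately from $q\ge q_{\min}>0$ on the compact set $\K$. All three equalities reduce to the same two ingredients, namely pointwise comparison (for the ``$\ge$'' directions and the Dirac measure trick) and rescaling of a measure by its $q$-mass (for $v_{1}=v_{2}$).
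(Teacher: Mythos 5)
Your proof is correct and follows essentially the same route as the paper's: both rest on the positivity of $q_{\min}$ on the compact set $\K$, the pointwise bound $p\ge \rho^{*}q$, and the Dirac measure at a minimizer $x^{*}$ of $p/q$. The only cosmetic difference is that you establish $v_{1}=v_{2}$ by an explicit rescaling correspondence between $P(\K)$ and $\{\varphi\in M(\K):\int_\K q\,d\varphi=1\}$, whereas the paper bounds $v_{2}$ directly against $\rho^{*}$ and exhibits the scaled Dirac $q(x^{*})^{-1}\delta_{x^{*}}$; the two arguments are interchangeable.
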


\begin{proof}
Let $\rho ^{\ast }:=\min_{x}\{p(x)/q(x):x\in \mathbf{K}\}$. As $q>0$ on $%
\mathbf{K}$,
\begin{equation*}
\frac{\int_\K p\,d\mu }{\int_\K q\,d\mu }\,= \,\frac{\int_\K (p/q)\,q\,d\mu }{\int_\K
q\,d\mu }\,\geq \rho ^{\ast }.
\end{equation*}%
Hence if $\mu\in P(\K)$ then $\int_\K (p/q)d\mu \geq \rho ^{\ast }\int_\K d\mu =\rho ^{\ast }$.
On the other hand, with $x^{\ast }\in \mathbf{K}$ a global minimizer of $p/q$ on $%
\mathbf{K}$, let $\mu :=\delta _{x^{\ast }}\in P(\mathbf{K})$ be the Dirac
measure at $x=x^{\ast }$. Then $\int_\K pd\mu /\int_\K qd\mu =p(x^{\ast
})/q(x^{\ast })=\int_\K (p/q)d\mu =\rho ^{\ast }$, and therefore
\begin{equation*}
\min_{\mu \in P(\mathbf{K})}\frac{\int_\K pd\mu }{\int_\K qd\mu }\,=\,\min_{\mu
\in P(\mathbf{K})}\int_\K \frac{p}{q}d\mu \,=\,\min_{x\in \mathbf{K}}:\frac{p(x)%
}{q(x)}=\rho ^{\ast }.
\end{equation*}%
Next, for every $\varphi \in M(\mathbf{K})$ with $\int_\K qd\varphi=1$, $\int_\K
p\,d\varphi \geq \int_\K \rho ^{\ast }\,q\,d\varphi \,=\,\rho ^{\ast }$, and so
$\min_{\varphi \in M(\mathbf{K})}\{\int_\K p\,d\varphi :\int_\K q\,d\varphi
\,=\,1\}\geq \rho ^{\ast }$. Finally taking $\varphi :=q(x^{\ast
})^{-1}\delta _{x^{\ast }}$ yields $\int_\K qd\varphi =1$ and $\int_\K p\,d\varphi
=p(x^{\ast })/q(x^{\ast })=\rho ^{\ast }$.

Another way to see why this is true is throughout the following argument.
The function $\mu \rightarrow \frac{\int_\K p\,d\mu }{\int_\K q\,d\mu }$ is
quasi-concave (and also quasi-convex) so that the optimal value of the
minimization problem is achieved on the boundary.
\end{proof}

\section{Minimizing the max of finitely many rational functions}

Let $\mathbf{K}\subset \mathbb{R}^{n}$ be the basic semi-algebraic set
\begin{equation}
\mathbf{K}\,:=\,\{x\in \mathbb{R}^{n}:g_{j}(x)\geq 0,\quad j=1,\ldots ,p\}
\label{setk}
\end{equation}%
for some polynomials $(g_{j})\subset \mathbb{R}[x]$, and let $%
f_{i}=p_{i}/q_{i}$ be rational functions, $i=0,1,\ldots ,m$, with $%
p_{i},q_{i}\in \mathbb{R}[x]$. We assume that:

$\bullet$  $\K$ satisfies Putinar's property (see Definition \ref{defput}) and,

$\bullet$  $q_i>0$ on $\K$ for every $i=0,\ldots,m$.
\vspace{0.2cm}

Consider the following problem denoted by $\mathbf{MRF}$:
\begin{equation}
\mrf:\quad \rho \,:=\displaystyle\min_{x}\{f_{0}(x)+\max_{i=1,%
\ldots ,m}f_{i}(x):x\in \mathbf{K}\,\},  \label{defp}
\end{equation}%
or, equivalently,
\begin{equation}
\mrf:\quad \rho \,=\displaystyle\min_{x,z}\{f_{0}(x)+z:z\geq
f_{i}(x), \:i=1,\ldots,m;\:x\in \mathbf{K}\,\}.  \label{defpnew}
\end{equation}
With $\mathbf{K}\subset \mathbb{R}^{n}$ as in (\ref{setk}), let $\widehat{%
\mathbf{K}}\subset \mathbb{R}^{n+1}$ be the basic semi algebraic set
\begin{equation}
\widehat{\mathbf{K}}\,:=\,\{(x,z)\in \mathbb{R}^{n}\times \mathbb{R}:x\in
\mathbf{K};\:z\,q_{i}(x)-p_{i}(x)\geq 0,i=1,\ldots ,m\}  \label{newsetk}
\end{equation}%
and consider the new infinite-dimensional optimization problem
\begin{equation}
\mathcal{P}:\quad \hat{\rho}\,:=\,\min_{\mu }\{\int_\K (p_{0}+z\,q_{0})\,d\mu
:\int_\K q_{0}\,d\mu =1,\quad\mu \in M(\widehat{\mathbf{K}})\}  \label{newpb}
\end{equation}%
where $M(\widehat{\mathbf{K}})$ is the set of finite Borel measures on $%
\widehat{\mathbf{K}}$. Problem (\ref{newpb}) is a particular instance of the {\it Generalized Problem of Moments} for which a general methodology (based on a hierarchy of semidefinite relaxations) has been described in \cite{lasserre2}. To make the paper self-contained we explain below how to apply this methodology in the above specific context.

\begin{proposition}
\label{prop1} Let $\mathbf{K}\subset \mathbb{R}^{n}$ be as in (\ref{setk}).
Then $\rho =\hat{\rho}$.
\end{proposition}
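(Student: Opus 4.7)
The plan is to establish the two inequalities $\hat\rho\le \rho$ and $\hat\rho\ge \rho$ directly, exploiting the reformulation (\ref{defpnew}). Because $q_i>0$ on $\K$, the constraint $z\ge f_i(x)=p_i(x)/q_i(x)$ is equivalent to $zq_i(x)-p_i(x)\ge 0$, so $\rho$ is the infimum of the continuous function $f_0(x)+z=(p_0(x)+zq_0(x))/q_0(x)$ over $(x,z)\in\k$. Since $\K$ is compact (Putinar's property) and every $f_i$ is continuous on $\K$ (because $q_i>0$ there), this infimum is attained at some $(x^*,z^*)\in\k$ with $z^*=\max_{1\le i\le m}f_i(x^*)$.

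For the upper bound $\hat\rho\le\rho$, I would exhibit the atomic measure $\mu:=q_0(x^*)^{-1}\,\delta_{(x^*,z^*)}\in M(\k)$. By construction $\int q_0\,d\mu=1$, and $\int(p_0+zq_0)\,d\mu=(p_0(x^*)+z^*q_0(x^*))/q_0(x^*)=f_0(x^*)+z^*=\rho$, which shows $\hat\rho\le\rho$.

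For the reverse inequality $\hat\rho\ge\rho$, take any feasible $\mu\in M(\k)$ with $\int q_0\,d\mu=1$ and define the probability measure $\nu$ on $\k$ by $d\nu(x,z):=q_0(x)\,d\mu(x,z)$; it has total mass $1$ because $q_0>0$ on $\K$ and $\int q_0\,d\mu=1$. Then
$$\int (p_0+zq_0)\,d\mu\,=\,\int \frac{p_0(x)+zq_0(x)}{q_0(x)}\,d\nu(x,z)\,=\,\int (f_0(x)+z)\,d\nu(x,z).$$
For every $(x,z)\in\k$ we have $z\ge f_i(x)$ for $i=1,\dots,m$, hence $f_0(x)+z\ge f_0(x)+\max_i f_i(x)\ge \rho$ pointwise on $\k$. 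Integrating against the probability measure $\nu$ yields $\int(p_0+zq_0)\,d\mu\ge\rho$, and taking the infimum over feasible $\mu$ gives $\hat\rho\ge\rho$.

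The one subtlety, and the only real obstacle, is that $\k$ is \emph{not} compact (the variable $z$ is unbounded above), so Lemma~\ref{lemm-frac} cannot be invoked verbatim with $\k$ in place of its compact $\mathbf{K}$. The pointwise/Dirac argument above is designed precisely to sidestep this; alternatively one could truncate by adding the redundant constraint $z\le M$ for $M$ larger than $\max_{x\in\K}\max_i f_i(x)$, observe that the minimum is preserved, and then apply Lemma~\ref{lemm-frac} on the compact set $\k\cap\{z\le M\}$ with $p=p_0+zq_0$ and $q=q_0$. Either route combines the two inequalities to give $\rho=\hat\rho$.
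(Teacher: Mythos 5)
Your proof is correct and takes essentially the same route as the paper: the paper makes $\widehat{\mathbf{K}}$ compact by adjoining the redundant bounds $\underline{z}\leq z\leq\overline{z}$ (with $\underline{z},\overline{z}$ the extrema of the $f_i$ over $\mathbf{K}$), observes that $\rho$ is the minimum of $(p_0+zq_0)/q_0$ over this set, and invokes Lemma~\ref{lemm-frac} --- precisely the alternative you sketch in your final paragraph. Your primary argument merely inlines the proof of that lemma (Dirac measure at a minimizer for $\hat\rho\leq\rho$, the pointwise bound $f_0(x)+z\geq\rho$ on $\widehat{\mathbf{K}}$ for $\hat\rho\geq\rho$), and you correctly identify the non-compactness of $\widehat{\mathbf{K}}$ as the one point requiring care.
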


\begin{proof}
The following upper and lower bounds
\[\underline{z}:=\displaystyle\min_{i=1,\ldots,m}\: \min_{x\in\K}f_i(x);\quad
\overline{z}:=\displaystyle\max_{i=1,\ldots,m}\: \max_{x\in\K}f_i(x),\]
are both well-defined since $\K$ is compact and $q_i>0$ on $\K$ for every $i=1,\ldots,m$.
Including the additional constraint $\underline{z}\leq z\leq \overline{z}$
in the definition of $\widehat{\K}$ makes it compact without changing the value of
$\rho$. Next observe that
\begin{equation}
\label{aux1}
\rho=\min_{(x,z)}\:\left\{ \frac{p_0(x)+zq_0(x)}{q_0(x)}\,:\:(x,z)\in\widehat{\K}\:\right\}.
\end{equation}
Applying Lemma \ref{lemm-frac} with $\widehat{\K}$ in lieu of $\K$,
and with $(x,z)\mapsto p(x,z):=p_0(x)+zq_0(x)$ and
$(x,z)\mapsto q(x,z):=q_0(x)$, yields the desired result.
\end{proof}

\begin{remark}
\label{remark1}
{\rm
(a) When $m=0$, that is when one wishes to minimize the rational function $f_0$ on $\K$,
then $\mathcal{P}$ reads $\min_\mu \{\int_\K f_0d\mu\,:\,\int_\K q_0d\mu=1;\,\mu\in M(\K)\}$.
Using the dual problem $\mathcal{P}^*:\max \,\{z : p_0(x)-zq_0(x) \geq0\quad\forall x\in \K\}$, Jibetean and DeKlerk \cite{jibetean}
proposed to approximate the optimal value by solving a hierarchy
of semidefinite relaxations. The case $m\geq 2$ is a nontrivial extension of the
case $m=0$ because one now wishes to minimize on $\K$
a function which is {\it not} a rational function. However,
by adding an extra variable $z$, one obtains the moment problem (\ref{newpb}), which indeed is the same as
minimizing the rational function $(x,z)\mapsto  \frac{p_0(x)+z\,q_0(x)}{q_0(x)}$
on a domain $\widehat{\K}\subset\R^{n+1}$. And the dual of $\mathcal{P}$ now reads
$\max\, \{\rho : p_0(x)+zq_0(x)-\rho q_0(x) \geq0\quad\forall (x,z)\in \widehat{\K}\}$. Hence if $\widehat{\K}$ is compact and satisfies Putinar's property
one may use the hierarchy of semidefinite relaxations defined in \cite{jibetean} and adapted to this specific context; see also \cite{lasserre2}.

(b) The case $m=1$ (i.e., when one wants to minimize the sum of rational functions $f_0+f_1$ on $\K$) is also interesting. One way is to reduce to the same denominator
$q_0\times q_1$ and minimize the rational function $(p_0q_1+p_1q_0)/q_0q_1$. But then
the first SDP relaxation of \cite{lasserre2,jibetean} would have to consider polynomials of degree at least $d:=\max[{\rm deg}\,p_0+{\rm deg}\,q_1,
{\rm deg}\,p_1+{\rm deg}\,q_0,{\rm deg}\,q_0+{\rm deg}\,q_1]$, which may be very penalizing
when $q_0$ and $q_1$ have large degree (and sometimes it may even be impossible!).
Indeed this SDP relaxation has as many as $O(n^{d})$ variables and a linear matrix inequality of
size $O(n^{\lceil d/2\rceil})$. In contrast, by proceeding as above in introducing the additional variable $z$, one now minimizes the rational function $(p_0+zq_0)/q_0$ which may be highly preferable
since the first relaxation only considers polynomials of degree bounded by
$\max[{\rm deg}\,p_0,1+{\rm deg}\,q_0]$ (but now in $n+1$ variables). For instance, if
${\rm deg}\,p_i={\rm deg}\,q_i=v$, $i=1,2$, then one has $O(n^{v+1})$ variables instead of $O(n^{2v})$
variables in the former approach.
}
\end{remark}
We next describe how to solve the $\mrf$ problem via a hierarchy of semidefinite
relaxations.

\subsection*{SDP relaxations for solving the $\mrf$ problem}

As $\mathbf{K}$ is compact and $q_i>0$ on $\K$, for all $i$, let
\begin{equation}
M_{1}\,:=\,\max_{i=1,\ldots ,m}\left\{ \frac{\max \{|p_{i}(x)|,x\in \mathbf{K%
}\}}{\min \{q_{i}(x),x\in \mathbf{K}\}}\right\} ,  \label{boundM1}
\end{equation}%
and
\begin{equation}
M_{2}\,:=\,\min_{i=1,\ldots ,m}\left\{ \frac{\min \{p_{i}(x),x\in \mathbf{K}%
\}}{\max \{q_{i}(x),x\in \mathbf{K}\}}\right\} .  \label{boundM2}
\end{equation}%
Redefine the set $\widehat{\mathbf{K}}$ to be
\begin{equation}
\widehat{\mathbf{K}}\,:=\,\left\{ (x,z)\in \mathbb{R}^{n}\times \mathbb{R}%
:h_{j}(x,z)\,\geq \,0,\quad j=1,\ldots p+m+1\right\}  \label{newsetknew}
\end{equation}%
with
\begin{equation}
\left\{
\begin{array}{lll}
(x,z)\mapsto & h_{j}(x,z)\,:=\,g_{j}(x) & j=1,\ldots ,p \\
(x,z)\mapsto & h_{j}(x,z)\,:=\,z\,q_{j}(x)-p_{j}(x) & j=p+1,\ldots ,p+m \\
(x,z)\mapsto & h_{j}(x,z)\,:=\,(M_{1}-z)(z-M_2) & j=m+p+1
\end{array}%
\right. .  \label{defh}
\end{equation}

\begin{lemma}
Let $\mathbf{K}\subset\mathbb{R}^n$ satisfy Putinar's property.
Then the set $\widehat{\mathbf{K}}\subset\mathbb{R}^{n+1}$
defined in (\ref{newsetknew}) satisfies Putinar's property.
\end{lemma}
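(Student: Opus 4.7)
The plan is to verify Putinar's property for $\widehat{\K}$ via the equivalent condition in Definition \ref{defput}: exhibit a constant $N$ and sums of squares $(\tau_j)$ such that
\[
N-\|x\|^{2}-z^{2} \,=\, \tau_{0}(x,z)+\sum_{j=1}^{p+m+1}\tau_{j}(x,z)\,h_{j}(x,z).
\]
I will build this representation by superposing two separately obtained identities: one that bounds $\|x\|^{2}$ using the $x$-only constraints, and one that bounds $z^{2}$ using the single constraint $h_{p+m+1}$.

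First, since $\K$ satisfies Putinar's property by hypothesis, there exist $M>0$ and s.o.s.\ polynomials $\sigma_{0},\ldots,\sigma_{p}\in\Sigma[x]$ with
\[
M-\|x\|^{2} \,=\, \sigma_{0}(x)+\sum_{j=1}^{p}\sigma_{j}(x)\,g_{j}(x).
\]
Because $h_{j}(x,z)=g_{j}(x)$ for $j=1,\ldots,p$, this same identity, viewed in $\R[x,z]$, gives a Putinar-type representation using $h_{1},\ldots,h_{p}$.

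Next, I would produce an analogous identity for $z$. The set $\{z:h_{p+m+1}(x,z)\geq0\}$ is the compact interval $[M_{2},M_{1}]$, so there should be a constant $C$ and sums of squares that certify $C-z^{2}\geq 0$ on this interval via $h_{p+m+1}$. I would try $\tilde\sigma(x,z)=(z-(M_{1}+M_{2}))^{2}+(C-M_{1}^{2}-M_{2}^{2})$ as the s.o.s.\ term with multiplier $2$ on $h_{p+m+1}$, and a direct expansion shows
\[
C-z^{2} \,=\, \bigl(z-(M_{1}+M_{2})\bigr)^{2}+(C-M_{1}^{2}-M_{2}^{2})+2\,h_{p+m+1}(x,z)
\]
which is a valid Putinar-type representation whenever $C\geq M_{1}^{2}+M_{2}^{2}$ (so the additive constant is nonnegative and hence a square).

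Adding the two identities and choosing $N:=M+C$ (with a zero multiplier on the intermediate constraints $h_{p+1},\ldots,h_{p+m}$, which are not needed) yields
\[
N-\|x\|^{2}-z^{2} \,=\, \bigl[\sigma_{0}(x)+\tilde\sigma(x,z)\bigr]+\sum_{j=1}^{p}\sigma_{j}(x)\,h_{j}(x,z)+2\,h_{p+m+1}(x,z),
\]
which is exactly the required Putinar representation for $\widehat{\K}$, establishing compactness of the level set $\{(x,z):N-\|x\|^{2}-z^{2}\geq0\}$ and the property. The only nonroutine step is producing the s.o.s.\ certificate for $C-z^{2}$ on $[M_{2},M_{1}]$ using a single multiplier of $h_{p+m+1}$; this is the main (mild) obstacle, handled above by completing the square after fixing $C$ sufficiently large.
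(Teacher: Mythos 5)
Your proof is correct and follows essentially the same route as the paper: both arguments lift the Putinar certificate for $\mathbf{K}$ to $\mathbb{R}[x,z]$ and use the single quadratic constraint $h_{m+p+1}=(M_1-z)(z-M_2)$ to gain compactness in $z$. The only difference is cosmetic: the paper certifies the polynomial $w(x,z)=M-\Vert x\Vert^2+(M_1-z)(z-M_2)$ (whose level set is compact) with the constant multiplier $1$ on $h_{m+p+1}$, thereby avoiding your completing-the-square computation for $C-z^2$, which is nevertheless verified correctly.
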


\begin{proof}
Since $\mathbf{K}$ satisfies Putinar's property,
equivalently, the quadratic polynomial $x\mapsto u(x):=M-\Vert x\Vert ^{2}$ can be
written in the form (\ref{putrep}), i.e.,
$u(x)=\sigma_0(x)+\sum_{j=1}^p\sigma_j(x)g_j(x)$ for some s.o.s. polynomials $(\sigma_j)\subset\Sigma[x]$. Next,
consider the quadratic polynomial
\begin{equation*}
(x,z)\mapsto w(x,z)\,=\,M-\Vert x\Vert ^{2} +(M_1-z)(z-M_2).
\end{equation*}%
Obviously, its level set $\{x:w(x,z)\geq 0\}\subset \mathbb{R}^{n+1}$ is
compact and moreover, $w$ can be written in the form
\begin{eqnarray*}
w(x,z)&=&\sigma _{0}(x)+\sum_{j=1}^{p}\sigma
_{j}(x)\,g_{j}(x)+(M_1-z)(z-M_2)\\
&=&
\sigma' _{0}(x,z)+\sum_{j=1}^{m+p+1}\sigma' _{j}(x,z)\,h_{j}(x,z)
\end{eqnarray*}
for appropriate s.o.s. polynomials $(\sigma'_j)\subset\Sigma[x,z]$. Therefore $\widehat{\mathbf{K}}$
satisfies Putinar's property in Definition \ref{defput}, the desired result.
\end{proof}

We are now in position to define the hierarchy of semidefinite relaxations
for solving the $\mrf$ problem. Let $\mathbf{y}=(y_\alpha)$ be a real sequence
indexed in the monomial basis $(x^\beta z^k)$ of $\mathbb{R}[x,z]$ (with $%
\alpha=(\beta,k)\in\mathbb{N}^n\times\mathbb{N}$).

Let $h_{0}(x,z):=p_{0}(x)+zq_{0}(x)$, and let $v_{j}:=\lceil(\mathrm{deg}%
\,h_{j})/2\rceil$ for every $j=0,\ldots ,m+p+1$. For $r\geq r_{0}:=%
\displaystyle\max_{j=0,\ldots ,p+m+1}v_{j}$, introduce the hierarchy of semidefinite
programs:
\begin{equation}
\mathbf{Q}_{r}:\left\{
\begin{array}{lll}
\displaystyle\min_{\mathbf{y}} & L_{\mathbf{y}}(h_{0}) &  \\
\mathrm{s.t.} & M_{r}(\mathbf{y}) & \succeq 0 \\
& M_{r-v_{j}}(h_{j},\mathbf{y}) & \succeq 0,\quad j=1,\ldots ,m+p+1 \\
& L_{y}(q_{0}) & =1,%
\end{array}%
\right. \   \label{lower}
\end{equation}%
with optimal value denoted $\inf \mathbf{Q}_{r}$ (and $\min \mathbf{Q}_{r}$
if the infimum is attained).

\begin{theorem}
\label{thmain} Let $\mathbf{K}\subset\mathbb{R}^n$ (compact) be as in (\ref%
{setk}). Let $\mathbf{Q}_r$ be
the semidefinite program (\ref{lower}) with $(h_j)\subset\mathbb{R}[x,z]$
and $M_1,M_2$ defined in (\ref{defh}) and (\ref{boundM1})-(\ref{boundM2})
respectively. Then:

\textrm{(a)} $\inf\mathbf{Q}_r\uparrow \rho$ as $r\to\infty$.

\textrm{(b)} Let $\mathbf{y}^r$ be an optimal solution of the SDP relaxation
$\mathbf{Q}_r$ in (\ref{lower}). If
\begin{equation}  \label{finiteconv}
\mathrm{rank}\,M_r(\mathbf{y}^r)\,=\,\mathrm{rank}\,M_{r-r_0}(\mathbf{y}%
^r)\,=\,t
\end{equation}
then $\min\Q_r=\rho$ and one may extract $t$ points $(x^*(k))_{k=1}^t\subset\mathbf{K}$, all global minimizers
of the $\mrf$ problem.

\textrm{(c)} Let $\mathbf{y}^r$ be a nearly optimal solution of the SDP relaxation (\ref{lower})
(with say $\inf\Q_r\leq L_{\y^r}\leq\inf\Q_r+1/r$. If (\ref{aux1}) has a unique global minimizer $x^*\in\K$
then the vector of first-order moments $(L_{\y^r}(x_1),\ldots,L_{\y^r}(x_n))$ converges
to $x^*$ as $r\to\infty$.
\end{theorem}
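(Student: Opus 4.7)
My plan is to treat the three claims in turn, leaning on the reformulation $\rho=\r$ from Proposition \ref{prop1}, on Putinar's theorem (Theorem \ref{thput}), and on the fact that $\k$ satisfies Putinar's property (established in the preceding lemma).

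For part (a), monotonicity of $r\mapsto\inf\Q_r$ is immediate because enlarging $r$ only adds PSD constraints. To get the upper bound $\inf\Q_r\le\rho$, I would take an optimal measure $\mu^*$ for $\p$ (which exists by weak-$*$ compactness of the set of finite measures on the compact $\k$ with $\int q_0\,d\mu=1$, together with continuity of $\int h_0\,d\mu$) and verify that its truncated moment sequence is feasible for every $\Q_r$ with objective value $\r=\rho$. The harder direction is the matching lower bound: for each $r$ take optimal (or $1/r$-optimal) $\y^r$. Using the Putinar-type PSD condition $M_{r-1}(w,\y^r)\succeq0$ for the quadratic witness $w$ of $\k$ supplied by the previous lemma, combined with $M_r(\y^r)\succeq0$ and the normalization $L_{\y^r}(q_0)=1$ (which bounds $y^r_0=L_{\y^r}(1)$ since $q_0$ is bounded below by a positive constant on the compact $\k$), one derives uniform bounds $|y^r_\alpha|\le C_\alpha$ independent of $r$. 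A diagonal extraction then yields a subsequential pointwise limit $\y^\infty$ satisfying $M_d(\y^\infty)\succeq0$ and $M_{d-v_j}(h_j,\y^\infty)\succeq0$ for every $d$ and every $j$. By Theorem \ref{thput}(b), $\y^\infty$ has a representing measure $\mu^\infty\in M(\k)$ with $\int q_0\,d\mu^\infty=1$; this $\mu^\infty$ is feasible for $\p$, which gives $\lim_r\inf\Q_r=\int h_0\,d\mu^\infty\ge\r=\rho$.

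For part (b), I would invoke the Curto-Fialkow flat extension theorem: the rank condition $\mathrm{rank}\,M_r(\y^r)=\mathrm{rank}\,M_{r-r_0}(\y^r)=t$ forces $\y^r$ to be the truncated moment sequence of a unique $t$-atomic positive measure $\mu$, and the PSD localizing matrices for the $h_j$ place all atoms of $\mu$ inside $\k$. Then $\mu$ is feasible for $\p$ with value $\inf\Q_r\le\r$, so by Proposition \ref{prop1} every atom $(x^{(k)},z^{(k)})$ must be a global minimizer of the reformulation \eqref{aux1}, and the $x$-projections yield $t$ global minimizers of $\mrf$; numerically the atoms are read off from the column echelon of $M_r(\y^r)$ by the standard Henrion-Lasserre extraction. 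For part (c), uniqueness of the $\mrf$-minimizer $x^*$ (and hence of $z^*=\max_i f_i(x^*)$) implies that $\p$ has the single optimal solution $\mu^*=\delta_{(x^*,z^*)}/q_0(x^*)$. The compactness-plus-uniqueness argument of part (a) then shows that every subsequential limit of $\{\y^r\}$ is the moment sequence of $\mu^*$, so the full sequence converges entrywise; in particular the first-order $x$-moments converge to the corresponding moments of $\mu^*$, which recover $x^*$ (after the prescribed normalization by $L_{\y^r}(q_0)=1$).

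The principal obstacle will be in part (a): turning the algebraic Putinar property of $\k$ into quantitative uniform bounds on the pseudomoments $y^r_\alpha$ so that the diagonal extraction produces a well-defined limit all of whose moment and localizing matrices are PSD. Once those bounds are in place, the flat-extension argument for (b) and the compactness-plus-uniqueness argument for (c) follow along standard lines from the moment-sos literature.
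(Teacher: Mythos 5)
Your proposal is correct in substance, but it takes a different (and more self-contained) route than the paper: the paper's own proof of Theorem \ref{thmain} is essentially a citation, reducing the statement to known results --- Jibetean and De Klerk for the convergence of the relaxations when minimizing the single rational function $(x,z)\mapsto (p_0(x)+zq_0(x))/q_0(x)$ on $\k$, Lasserre's treatment of the Generalized Problem of Moments for (a)--(b), and Schweighofer for (c) --- whereas you reconstruct the underlying arguments directly (monotonicity plus feasibility of the optimal measure for the upper bound, uniform pseudo-moment bounds and diagonal extraction plus Theorem \ref{thput}(b) for the lower bound, Curto--Fialkow flat extension for (b), and uniqueness-plus-compactness for (c)). This is exactly the machinery inside the cited results, so the two proofs agree mathematically; yours buys self-containedness at the cost of having to carry out the quantitative step you correctly identify as the crux.

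Two points in your sketch deserve tightening. First, in part (a) you bound $y^r_0$ from $L_{\y^r}(q_0)=1$ by saying $q_0$ is bounded below by $\epsilon>0$ on $\K$; but $\y^r$ is only a pseudo-moment sequence, so positivity of $q_0-\epsilon$ on $\K$ must first be converted into a Putinar certificate $q_0-\epsilon=\sigma_0+\sum_j\sigma_j g_j$ and then paired with the PSD moment and localizing matrices --- which only yields $L_{\y^r}(q_0-\epsilon)\geq 0$, hence $y^r_0\leq 1/\epsilon$, once $r$ exceeds the degrees of the $\sigma_j$. Since the claim is asymptotic this is harmless, but it is the same subtlety that governs the bounds on the higher pseudo-moments via the witness $w$, and it should be stated. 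Second, in part (c) the optimal measure is $\mu^*=\delta_{(x^*,z^*)}/q_0(x^*)$, so the raw first-order moments converge to $x^*/q_0(x^*)$; one recovers $x^*$ only after dividing by $y^r_0\to 1/q_0(x^*)$ (or trivially when $q_0\equiv 1$, as in the game applications). Your parenthetical about normalization gestures at this, but as literally stated the theorem's conclusion requires that extra division, and your proof should make it explicit.
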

\begin{proof}
As already mentioned in Remark \ref{remark1},
convergence of the dual of the semidefinite relaxations (\ref{lower})
was first proved in Jibetean and de Klerk \cite{jibetean} for minimizing a rational function
on a basic compact semi-algebraic set (in our context, for minimizing the rational function $(x,z)\mapsto (p_0(x)+zq_0(x))/q_0(x)$ on the set $\widehat{\K}\subset\R^{n+1}$).
See also \cite[\S 4.1]{lasserre2} and \cite[Theor. 3.2]{lasserre22}.
In particular to get (b) see \cite[Theor. 3.4]{lasserre22}. The proof of
(c) is easily adapted from Schweighofer \cite{markus}.
\end{proof}

\begin{remark}\label{remark-selection}
{\rm Hence, by Theorem \ref{thmain}(b), when finite convergence occurs one may extract
$t:={\rm rank}\,M_r(\y)$ global minimizers.
On the other hand, a {\em generic} $\mrf$ problem has a unique global minimizer $x^*\in\K$
and in this case, even when the convergence is only asymptotic, one may still obtain an
approximation of $x^*$ (as closely as desired) from the vector of first-order moments $(L_{\y^r}(x_1),\ldots,L_{\y^r}(x_n))$. For instance, one way to have a unique global minimizer is to
$\epsilon$-perturb the objective function of the $\mrf$ problem by some randomly generated polynomial of a sufficiently large degree, or
to slightly perturb the coefficients of the data $(h_i,g_j)$ of the $\mrf$ problem.
}
\end{remark}

To solve (\ref{lower}%
) one may use e.g. the Matlab based public software GloptiPoly 3 \cite%
{gloptipoly3} dedicated to solve the generalized problem of moments
described in \cite{lasserre2}. It is an extension of GloptiPoly \cite{acm}
previously dedicated to solve polynomial optimization problems. A procedure
for extracting optimal solutions is implemented in Gloptipoly when the rank
condition (\ref{finiteconv}) is satisfied\footnote{%
In fact GloptiPoly 3 extracts \textit{all} solutions because most
SDP solvers that one may call in GloptiPoly 3 (e.g. SeDuMi) use primal-dual
interior points methods with the self-dual embedding technique
which find an optimal solution in the relative
interior of the set of optimal solutions; see \cite[\S 4.4.1, p. 663]{lasserre3}. In the present context of (\ref{lower}) this
means that at an optimal solution $\mathbf{y}^{*}$, the moment matrix $M_{r}(%
\mathbf{y}^{*})$ has maximum rank and its rank corresponds to the number of
solutions.}. For more details the interested reader is referred to \cite%
{gloptipoly3} and \texttt{www.laas.fr/$\sim$henrion/software/}.
\begin{remark}
\label{remark-linear}
{\rm If $g_j$ is affine for every $j=1,\ldots,p$
and if $p_j$ is affine and $q_j\equiv1$ for every $j=0,\ldots,m$, then $h_j$
is affine for every $j=0,\ldots,m$. One may also replace the single quadratic constraint
$h_{m+p+1}(x,z)=(M_1-z)(z-M_2)\geq0$ with the two equivalent linear constraints
$h_{m+p+1}(x,z)=M_1-z\geq0$ and $h_{m+p+2}(x,z)=z-M_2\geq0$.
In this case,  it suffices to solve the
single semidefinite relaxation $\mathbf{Q}_1$, which is in fact a linear
program. Indeed, for $r=1$, $\mathbf{y}=(y_0,(x,z),Y)$ and
\begin{equation*}
M_1(\mathbf{y})\,=\,\left[%
\begin{array}{ccc}
y_0 & \vert & (x\quad z) \\
- &  & - \\
\left(%
\begin{array}{c}
x \\
z%
\end{array}%
\right) & \vert & Y%
\end{array}%
\right].
\end{equation*}
Then (\ref{lower}) reads
\begin{equation*}
\mathbf{Q}_{1}:\left\{
\begin{array}{lll}
\displaystyle\min_{\mathbf{y}} & h_0(x) &  \\
\mathrm{s.t.} & M_{1}(\mathbf{y}) & \succeq 0 \\
& h_{j}(x,z) & \geq0,\quad j=1,\ldots ,m+p+2 \\
& y_0 & =1.%
\end{array}%
\right..
\end{equation*}
As $v_j=1$ for every $j$, $M_{1-1}(h_j,\mathbf{y})\succeq0%
\Leftrightarrow M_0(h_j, \mathbf{y})=L_\mathbf{y}(h_j)=h_j(x,z)\geq0$, a
linear constraint. Hence the constraint $M_1(\mathbf{y})\succeq0$ can be
discarded as given any $(x,z) $ one may always find $Y$ such that $M_1(%
\mathbf{y})\succeq0$. Therefore, (\ref{lower}) is a linear program. This is fortunate for finite zero-sum games applications since computing the value is equivalent to minimizing a maximum of finitely many linear functions (and it is already known that it can be solved by Linear Programming).
}
\end{remark}

\section{Applications to finite games}

In this section we show that several solution concepts of static and dynamic finite games
reduce to solving the $\mrf$ problem (\ref{defp}). Those are just examples and one expects that such a reduction also holds in a much larger class of games (when they are described by
finitely many scalars).

\subsection{Standard static games}

A finite game is a tuple $(N,\left\{ S^{i}\right\} _{i=1,...,N},\left\{
g^{i}\right\} _{i=1,...,N})$ where $N\in \mathbb{N}$ is the set of players, $%
S^{i}$ is the finite set of pure strategies of player $i$ and $g^{i}:\mathbf{%
S}\rightarrow \mathbb{R}$ is the payoff function of player $i$, where $%
\mathbf{S}:=S^{1}\times ...\times S^{N}.$ The set
\begin{equation*}
\Delta ^{i}=\left\{ \left( p^{i}(s^{i})\right) _{s^{i}\in S^{i}}:\quad
p^{i}(s^{i})\geq 0,\sum_{s^{i}\in S^{i}}p^{i}(s^{i})=1\right\}
\end{equation*}%
of probability distributions over $S^{i}$ is called the set of mixed
strategies of player $i$. Notice that $\Delta ^{i}$ is a compact basic
semi-algebraic set. If each player $j$ chooses the mixed strategy $%
p^{j}(\cdot ),$ the vector denoted $p=\left( p^{1},...,p^{N}\right) \in
\mathbf{\Delta :}=\Delta ^{1}\times ...\times \Delta ^{N}$ is called a
\textit{profile} of mixed strategies and the expected payoff of a player $i$
is

\begin{equation*}
g^{i}(p)=\sum_{s=(s^{1},...,s^{N})\in S}p^{1}(s^{1})\times ...\times
p^{i}(s^{i})\times ...\times p^{N}(s^{N})g^{i}(s).
\end{equation*}

This is nothing but the multi-linear extension of $g^i$. For a player $i$, and a profile $p,$ let $p^{-i}$ be the profile of the
other players except $i$: that is $%
p^{-i}=(p^{1},...,p^{i-1},p^{i+1},...,p^{N}).$ Let $\mathbf{S}%
^{-i}=S^{1}\times ...\times S^{i-1}\times S^{i+1}\times ...\times S^{N}$ and
define
\begin{equation*}
g^{i}(s^{i},p^{-i})=\sum_{s^{-i}\in S^{-i}}
p^{1}(s^{1})\times ...\times p^{i-1}(s^{i-1})\times
p^{i+1}(s^{i+1})\times ...\times p^{N}(s^{N})g^{i}(s),
\end{equation*}
where $s^{-i}:=(s^{1},...,s^{i-1},s^{i+1},...,s^{N})\in S^{-i}$.

A profile $p_{0}$ is a Nash \cite{Nash 1} equilibrium if and only for all $i=1,...,N$ and
all $s^{i}\in S^{i}$, $g^{i}(p_{0})\geq g^{i}(s^{i},p_{0}^{-i})$ or
equivalently if:%
\begin{equation}
\label{iff}
p_{0}\in \arg \min_{p\in \mathbf{\Delta }}\:\left\{\max_{i=1,...,N}\max_{s^{i}\in
S^{i}}\left\{ g^{i}(s^{i},p_{0}^{-i})-g^{i}(p_{0})\right\}\right\} .
\end{equation}

Since each finite game admits at least one Nash equilibrium \cite{Nash 1}, the optimal value of the min-max problem (\ref{iff}) is zero. Notice that
(\ref{iff}) is a particular instance of the {\bf MRF} problem
(\ref{defp}) (with a set $\K=\Delta$ that satisfies Putinar's property and with $q_i=1$ for every $i=0,\ldots,m$), and so Theorem \ref{thmain} applies.
Finally, observe that the number $m$ of polynomials in the inner double "max"
of (\ref{iff}) (or, equivalently, $m$ in (\ref{defp})) is just $m=\sum_{i=1}^n \vert S^i\vert$, i.e.,
$m$ is just the total number of all possible actions.

Hence by solving the hierarchy of
SDP relaxations (\ref{lower}), one can approximate the value of the min-max problem as
closely as desired. In addition, if (\ref{finiteconv}) is satisfied at some
relaxation $\mathbf{Q}_{r}$, then one may extract all the Nash equilibria of the game.

If there is a unique equilibrium $p^*$ then by Theorem \ref{thmain}(c), one may
obtain a solution arbitrary close to $p^*$ and so obtain an $\epsilon$-equilibrium in finite time. Since game problems are not generic $\mrf$ problems, they
have potentially several equilibria which are all  global minimizers of the associated
$\mrf$ problem. Also, perturbing the data of a finite game still leads to
a non generic associated $\mrf$ problem with possibly multiple solutions. However, as in Remark \ref{remark-selection}, one could perturb the $\mrf$ problem
associated with the original game problem to obtain (generically) an
$\epsilon$-perturbed $\mrf$ problem with a unique optimal solution. Notice that the $\epsilon$-perturbed $\mrf$ is not necessarily coming from a finite game. Doing so, by Theorem \ref{thmain}(c), one obtains a sequence that converges asymptotically (and sometimes in finitely many steps)
to an $\epsilon$-equilibrium of the game problem.
Recently, Lipton, Markakis and Mehta \cite{LiptonMarkakisMehta} provided an algorithm that computes an $\epsilon$-equilibrium in less than exponential time but still not polynomial (namely $n^{\frac{logn}{\epsilon^2}}$ where $n$ is the total number of strategies). This promising result yields Papadimitriou \cite{Papadimitriou1} to argue that ``\emph{finding a mixed Nash equilibrium is PPAD-complete raises some interesting questions regarding the usefulness of Nash equilibrium, and helps focus our interest in alternative notions (most interesting among them the approximate Nash equilibrium)}''.

\begin{example}
\label{ex1} {\small {\rm Consider the simple illustrative example of a $%
2\times 2$ game with data
\begin{equation*}
\begin{array}{ccc}
& s_1^2 & s_2^1 \\
s_1^1 & (a,c) & (0,0) \\
s_2^1 & (0,0) & (b,d)%
\end{array}%
\end{equation*}
for some scalars $(a,b,c,d)$. Denote $x_1\in[0,1]$ the probability for player $%
1$ of playing $s_1^1$ and $x_2\in[0,1]$ the probability for player $2$ of
playing $s_1^2$. Then one must solve
\begin{equation*}
\min_{x_1,x_2\in[0,1]}\:\max\left\{%
\begin{array}{l}
ax_1-ax_1x_2-b(1-x_1)(1-x_2) \\
b(1-x_2)-ax_1x_2-b(1-x_1)(1-x_2) \\
cx_1-cx_1x_2-d(1-x_1)(1-x_2) \\
d(1-x_1)-cx_1x_2-d(1-x_1)(1-x_2)%
\end{array}%
\right..
\end{equation*}
We have solved the hierarchy of semidefinite programs (\ref{lower}) with
GloptiPoly 3 \cite{gloptipoly3}. For instance, the moment matrix $M_1(%
\mathbf{y})$ of the first SDP relaxation $\mathbf{Q}_1$ reads
\begin{equation*}
M_1(\mathbf{y})\,=\,\left[%
\begin{array}{cccc}
y_0 & y_{100} & y_{010} & y_{001} \\
y_{100} & y_{200} & y_{010} & y_{001} \\
y_{010} & y_{110} & y_{020} & y_{011} \\
y_{001} & y_{101} & y_{011} & y_{002}\\
\end{array}%
\right],
\end{equation*}
and $\mathbf{Q}_1$ reads
\begin{equation*}
\mathbf{Q}_{1}:\left\{
\begin{array}{ll}
\displaystyle\min_{\mathbf{y}} & y_{001} \\
\mathrm{s.t.} & M_{1}(\mathbf{y})\,\succeq 0 \\
& y_{001}-ay_{100}+ay_{110}+b(y_0-y_{100}-y_{010}+y_{110})\,\geq 0 \\
& y_{001}-by_0+by_{010}+ay_{110}+b(y_0-y_{100}-y_{010}+y_{110})\,\geq 0 \\
& y_{001}-cy_{100}+cy_{110}+d(y_0-y_{100}-y_{010}+y_{110})\,\geq 0 \\
& y_{00}-dy_0+dy_{100}+cy_{110}+d(y_0-y_{100}-y_{010}+y_{110})\,\geq 0 \\
&y_{100}-y_{200}\geq0;\: y_{010}-y_{020}\geq0 ;\: 9-y_{002}\geq0\\
& y_0 \,=1%
\end{array}%
\right..
\end{equation*}
With $(a,b,c,d)=(0.05,0.82,0.56,0.76)$, solving $\mathbf{Q}_3$ yields the
optimal value $3.93.10^{-11}$ and the three optimal solutions $(0,0)$, $%
(1,1) $ and $(0.57575,0.94253)$. With randomly generated $a,b,c,d\in [0,1]$
we also obtained a very good approximation of the global optimum $0$ and $3$
optimal solutions in most cases with $r=3$ (i.e. with moments or order $6$
only) and sometimes $r=4$. }}

{\small {\rm We have also solved 2-player non-zero-sum $p\times q$ games
with randomly generated reward matrices $A,B\in \mathbb{R}^{p\times q}$ and $%
p,q\leq 5$. We could solve $(5,2)$ and $(4,q)$ (with $q\leq 3$) games
exactly with the $4$th (sometimes $3$rd) SDP relaxation, i.e. $\inf \mathbf{Q%
}_{4}=O(10^{-10})\approx 0$ and one extracts an optimal solution \footnote{%
In general, it is not known which relaxation suffices to solve the min-max problem. Also, as already mentioned, GloptiPoly 3 extracts \textit{all} solutions
because most SDP solvers that one may call in GloptiPoly 3 (e.g. SeDuMi) use
primal-dual interior points methods with the self-dual embedding technique which find an optimal solution in the relative interior of the set of optimal solutions. This is explained in \cite[\S 4.4.1, p. 663]{lasserre3}}. However, the size is relatively
large and one is close to the limit of present public SDP solvers like
SeDuMi. Indeed, for a $2$-player $(5,2)$ or $(4,3)$ game, $\mathbf{Q}_{3}$
has 923 variables and $M_{3}(\mathbf{y})\in \mathbb{R}^{84\times 84}$,
whereas $\mathbf{Q}_{4}$ has 3002 variables and $M_{4}(\mathbf{y})\in
\mathbb{R}^{210\times 210}$. For a $(4,4) $ game $\mathbf{Q}_{3}$ has $1715$
variables and $M_{3}(\mathbf{y})\in \mathbb{R}^{120\times 120}$ and $\mathbf{%
Q}_{3}$ is still solvable, whereas $\mathbf{Q}_{4}$ has $6434$ variables and
$M_{4}(\mathbf{y})\in \mathbb{R}^{330\times 330}$.

Finally we have also solved randomly generated instances
of 3-player non-zero sum games with $(2,2,2)$ actions and $(3,3,2)$ actions.
In all $(2,2,2)$ cases the 4th relaxation
$\mathbf{Q}_4$ provided the optimal value and the rank-test
(\ref{finiteconv}) was passed (hence allowing to extract
global minimizers). For the $(3,3,2)$ games,
the third relaxation $\mathbf{Q}_3$ was enough in 30\% of cases
and the fourth relaxation $\mathbf{Q}_4$ in 80\% of cases.

}}
\end{example}

Another important concept in game theory is the min-max payoff $\underline{v}%
_{i}$ which plays an important role in repeated games (Aumann and Shapley \cite%
{AumannShapley}):

\begin{equation*}
\underline{v}_{i}=\min_{p^{-i}\in \mathbf{\Delta }^{-i}}\max_{s^{i}\in
S^{i}}g^{i}(s^{i},p^{-i})
\end{equation*}%
where $\mathbf{\Delta }^{-i}=\Delta ^{1}\times ...\times \Delta ^{i-1}\times
\Delta ^{i+1}\times ...\times \Delta ^{N}.$ This is again a particular
instance of the $\mrf$ problem (\ref{defp}). Hence, it seems more difficult to
compute the approximate min-max strategies compared to approximate Nash equilibrium strategies
because we do not know in advance the value of $\underline{v}_{i}$ while we know that the min-max value associated to the Nash problem is always zero. This is not surprising: in theory, computing a Nash-equilibrim is PPAD-complete \cite{Papadimitriou1} while computing the min-max is NP-hard \cite{Borgs}. In the case of two players, the function $g^{i}(s^{i},p^{-i})$ is linear in $p^{-i}.$ By
remark \ref{remark-linear} it suffices to solve the first relaxation $%
\mathbf{Q}_{1},$ a linear program.

\begin{remark}
{\rm The Nash equilibrium problem may be reduced to solving a system of polynomial
equations (see e.g. \cite{datta}). In the same spirit, an alternative for the Nash-equilibrium
problem (but not for the $\mrf$ problem in general) is to apply the moment approach described in Lasserre et al. \cite{lasserre3} for finding \textit{real}
roots of polynomial equations. If there are finitely many Nash equilibria
then its convergence is {\em finite} and in contrast with the algebraic
methods \cite{datta,lipton,bernd} mentioned above, it provides \textit{all}
real solutions without computing all complex roots.
}
\end{remark}

\subsection{Loomis games}

Loomis \cite{Loomis} extended the min-max theorem to zero-sum games with a rational fraction. His model may be
extended to $N$-player games as follows. Our extension is justified by the next section.

Associated with each player $i\in N$ are two functions $g^{i}:\mathbf{S}\rightarrow \mathbb{R}$ and $f^{i}:\mathbf{S}%
\rightarrow \mathbb{R}$ where $f^{i}>0$ and $%
\mathbf{S}:=S^{1}\times ...\times S^{N}$. With same notation as in the last section, let their multi-linear
extension to $\mathbf{\Delta }$ still be denoted by $g^{i}$ and $f^{i}$. That is, for $p\in\mathbf{\Delta }$, let:
\begin{equation*}
g^{i}(p)=\sum_{s=(s^{1},...,s^{N})\in S}p^{1}(s^{1})\times ...\times
p^{i}(s^{i})\times ...\times p^{N}(s^{N})g^{i}(s).
\end{equation*}
and similarly for $f^i$.

\begin{definition}
\textrm{A Loomis game is defined as follows. The strategy set of player $%
i$ is $\Delta ^{i}$ and
if the profile $p\in \mathbf{\Delta }$ is chosen, his payoff function
is $h^{i}(p)=\frac{g^{i}(p)}{f^{i}(p)}$.}
\end{definition}

One can show the following new lemma\footnote{As far as we know, non-zero sum Loomis games are not considered in the literature. This model could be of interest in situations where there are populations with many players. A mixed strategy for a population describes the proportion of players in the population that uses some pure action. $g^{i}(p)$ is the non-normalized payoff of population $i$ and $f^{i}(p)$ may be interpreted as the value of money for population $i$ so that $h^{i}(p)=\frac{g^{i}(p)}{f^{i}(p)}$ is the normalized payoff of population $i$.}.

\begin{lemma}
A Loomis game admits a Nash equilibrium\footnote{Clearly, the lemma and its proof still hold in infinite games where the sets $S^i$ are convex-compact-metric and the functions $f^i$ and $g^i$ are continuous. The summation in the multi-linear extension should be replaced by an integral.}.
\end{lemma}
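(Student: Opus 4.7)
The plan is to verify the hypotheses of Kakutani's fixed-point theorem for the best-response correspondence, mimicking Nash's original existence argument while paying attention to the fact that each $h^{i}$ is a rational rather than multilinear function of the profile.

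Setup and continuity. The product $\mathbf{\Delta}=\Delta^{1}\times\cdots\times\Delta^{N}$ is non-empty, convex, and compact, being a finite product of simplices. Since $f^{i}>0$ on the finite set $\mathbf{S}$, its multilinear extension, a convex combination of these positive numbers, stays strictly positive on the compact set $\mathbf{\Delta}$, and therefore is bounded below by a positive constant. Consequently, for every $i$, the ratio $h^{i}=g^{i}/f^{i}$ is continuous on $\mathbf{\Delta}$.

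Best-response correspondence. For a fixed profile $p^{-i}$ of the opponents, set
\[
BR_{i}(p^{-i}):=\arg\max_{q^{i}\in\Delta^{i}}h^{i}(q^{i},p^{-i}).
\]
By continuity of $h^{i}$ and compactness of $\Delta^{i}$ this set is non-empty, and Berge's maximum theorem makes $BR_{i}$ upper hemi-continuous with compact values. The decisive point is convexity. Because $g^{i}$ and $f^{i}$ are multilinear, both $q^{i}\mapsto g^{i}(q^{i},p^{-i})$ and $q^{i}\mapsto f^{i}(q^{i},p^{-i})$ are affine on $\Delta^{i}$, and the denominator remains strictly positive. Writing $\alpha^{\ast}:=\max_{q^{i}\in\Delta^{i}}h^{i}(q^{i},p^{-i})$, one obtains
\[
BR_{i}(p^{-i})=\{q^{i}\in\Delta^{i}:g^{i}(q^{i},p^{-i})-\alpha^{\ast}f^{i}(q^{i},p^{-i})\geq0\},
\]
which is the intersection of $\Delta^{i}$ with an affine half-space, hence convex. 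The product correspondence $p\mapsto\prod_{i=1}^{N}BR_{i}(p^{-i})$ thus sends the non-empty convex compact $\mathbf{\Delta}$ into itself, has non-empty convex values, and has closed graph. Kakutani's fixed-point theorem delivers $p^{\ast}\in\mathbf{\Delta}$ with $p^{\ast,i}\in BR_{i}(p^{\ast,-i})$ for every $i$, which is exactly a Nash equilibrium of the Loomis game.

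Main obstacle. The only subtlety is the convexity of $BR_{i}(p^{-i})$: the standard Nash argument relies on concavity of a player's payoff in her own strategy, whereas the rational function $h^{i}$ is merely \emph{quasi-linear} in $q^{i}$. Recognising $h^{i}(\cdot,p^{-i})$ as a linear-fractional function with positive denominator, whose upper level sets are precisely intersections of the simplex with half-spaces, is what restores the missing ingredient and permits Kakutani to go through. The extension flagged in the footnote to convex-compact-metric pure-strategy spaces and continuous $g^{i},f^{i}$ carries over verbatim provided one still has affineness (or at least quasi-linearity) of $h^{i}(\cdot,p^{-i})$ in the own mixed strategy, which is automatic once one passes to mixed strategies over such spaces.
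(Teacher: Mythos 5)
Your proof is correct and rests on exactly the same key observation as the paper's: since $g^{i}$ and $f^{i}$ are multilinear and $f^{i}>0$, the upper level sets of $q^{i}\mapsto h^{i}(q^{i},p^{-i})$ are intersections of $\Delta^{i}$ with half-spaces, i.e.\ the payoff is quasi-concave (indeed quasi-linear) in the player's own strategy. The only difference is presentational: the paper then cites Glicksberg's theorem as a black box (which also covers the infinite-dimensional extension in the footnote, where Kakutani proper would need to be replaced by Kakutani--Fan--Glicksberg), whereas you unpack the same fixed-point argument explicitly via the best-response correspondence and Kakutani.
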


\begin{proof}
Note that each payoff function is quasi-concave in $p^{i}$ (and also
quasi-convex so that it is a quasi-linear function). Actually, if $%
h^{i}(p_{1}^{i},p^{-i})\geq \alpha $ and $h^{i}(p_{2}^{i},p^{-i})\geq \alpha
$ then for any $\delta \in \lbrack 0,1]$,%
\begin{equation*}
g^{i}(\delta p_{1}^{i}+(1-\delta )p_{2}^{i},p^{-i})\geq f^{i}(\delta
p_{1}^{i}+(1-\delta )p_{2}^{i},p^{-i})\alpha ,
\end{equation*}
so that $h^{i}(\delta p_{1}^{i}+(1-\delta )p_{2}^{i},p^{-i})\geq \alpha $. \
One can now apply Glicksberg's \cite{Glicksberg} theorem because the
strategy sets are compact, convex, and the payoff functions are
quasi-concave and continuous.
\end{proof}

\begin{corollary}
A profile $p_{0}\in \mathbf{\Delta }$\textbf{\ }is a Nash equilibrium of a Loomis game
if and only if%
\begin{equation}
\label{minmaxloomis}
p_{0}\in \arg \min_{p\in \Delta }\:\left\{\max_{i=1,...,N}\,\max_{s^{i}\in
S^{i}}\left\{ h^{i}(s^{i},p^{-i})-h^{i}(p)\right\}\right\} .
\end{equation}
\end{corollary}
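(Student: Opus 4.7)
The plan is to show that both conditions are equivalent to the statement that the min-max in \eqref{minmaxloomis} has value zero and that $p_0$ attains this zero. The lemma just proved guarantees that a Nash equilibrium exists, so reducing both properties to the same optimization identity closes the equivalence.

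First I would establish the key "pure-deviation suffices" identity: for each player $i$ and profile $p\in\mathbf{\Delta}$,
\begin{equation*}
h^{i}(p^{i},p^{-i})\,=\,\frac{\sum_{s^{i}\in S^{i}}p^{i}(s^{i})\,f^{i}(s^{i},p^{-i})\,h^{i}(s^{i},p^{-i})}{\sum_{s^{i}\in S^{i}}p^{i}(s^{i})\,f^{i}(s^{i},p^{-i})},
\end{equation*}
which follows by multilinearity of $g^{i}$ and $f^{i}$ in $p^{i}$. Since $f^{i}>0$, the right-hand side is a convex combination of the numbers $\{h^{i}(s^{i},p^{-i})\}_{s^{i}\in S^{i}}$, so
\begin{equation*}
h^{i}(p)\,\leq\,\max_{s^{i}\in S^{i}} h^{i}(s^{i},p^{-i}),\qquad \forall p\in\mathbf{\Delta},\,\forall i.
\end{equation*}
Two immediate consequences follow. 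First, the inner max-max in \eqref{minmaxloomis} is always $\geq 0$ at every $p$, so the min-max value $V$ satisfies $V\geq 0$. Second, a profile $p_0$ is a Nash equilibrium if and only if $h^{i}(s^{i},p_0^{-i})\leq h^{i}(p_0)$ for every $i$ and every pure $s^{i}\in S^{i}$ (the "if" uses the convex-combination inequality above to recover all mixed deviations, the "only if" is trivial).

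Now I can close both directions. If $p_0$ is a Nash equilibrium, then by the pure-deviation characterization $\max_{i,s^{i}}\{h^{i}(s^{i},p_0^{-i})-h^{i}(p_0)\}\leq 0$, combined with $V\geq 0$ this forces $V=0$ and $p_0\in\arg\min$. Conversely, by the preceding lemma a Nash equilibrium $p^{\ast}$ exists, hence by the same pure-deviation characterization the min-max value at $p^{\ast}$ is $\leq 0$, so $V=0$; if $p_0$ attains the minimum then $\max_{i,s^{i}}\{h^{i}(s^{i},p_0^{-i})-h^{i}(p_0)\}=0$, which yields $h^{i}(s^{i},p_0^{-i})\leq h^{i}(p_0)$ for all $i,s^{i}$, and therefore $p_0$ is a Nash equilibrium by the pure-deviation characterization.

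The one non-routine step is the convex-combination identity for $h^{i}(p)$; everything else is bookkeeping. Without the positivity of $f^{i}$ this step would fail, which is exactly the standing assumption made on Loomis games. The role of the existence lemma is merely to certify that the common value $V$ is indeed $0$, so that "attaining the min" and "being a Nash equilibrium" reduce to the same inequality system.
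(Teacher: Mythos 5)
Your proof is correct and follows essentially the same route as the paper: your ``pure-deviation suffices'' convex-combination identity is exactly the paper's appeal to quasi-linearity of $h^{i}$ in $p^{i}$ (equivalently its Lemma \ref{lemm-frac}), which reduces the maximum over mixed deviations $\widetilde{p}^{i}\in\Delta^{i}$ to a maximum over pure $s^{i}\in S^{i}$. The remaining bookkeeping (nonnegativity of the inner max and the use of the existence lemma to pin the optimal value at zero) is what the paper compresses into its opening ``clearly,'' and you have simply made it explicit.
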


\begin{proof}
Clearly, $p_{0}\in \mathbf{\Delta }$\textbf{\ }is an equilibrium of the
Loomis game if and only if%
\begin{equation*}
p_{0}\in \arg \min_{p\in \Delta }\:\left\{\max_{i=1,...,N}\max_{\widetilde{p}^{i}\in
\Delta ^{i}}\left\{ \frac{g(\widetilde{p}^{i},p^{-i})}{f^{i}(\widetilde{p}%
^{i},p^{-i})}-\frac{g^{i}(p)}{f^{i}(p)}\right\}\right\} .
\end{equation*}%
Using the quasi-linearity of the payoffs or Lemma \ref{lemm-frac}, one
deduces that:
\begin{equation*}
\max_{\widetilde{p}^{i}\in \Delta ^{i}}\frac{g^{i}(\widetilde{p}^{i},p^{-i})%
}{f^{i}(\widetilde{p}^{i},p^{-i})}=\max_{s^{i}\in S^{i}}\frac{%
g^{i}(s^{i},p^{-i})}{f^{i}(s^{i},p^{-i})}
\end{equation*}%
which is the desired result.
\end{proof}

Again, the min-max optimization problem (\ref{minmaxloomis})
is a particular instance of the $\mrf$ problem (\ref%
{defp}) and so can be solved via the hierarchy of semidefinite relaxations (%
\ref{lower}). Notice that in (\ref{minmaxloomis}) one has to minimize the supremum of rational functions (in contrast to the supremum of polynomials in (\ref{iff})).

\subsection{Finite absorbing games}

This subclass of stochastic games has been introduced by Kohlberg \cite%
{Kohlberg}. The following formulas are established in \cite{Laraki}. It shows that absorbing games could be reduced to Loomis games. An $N$-player finite absorbing game is defined as follows.

As above, there are $N$ finite sets $(S^{1},...,S^{N}).$ There are two functions $g^{i}:\mathbf{S}\rightarrow \mathbb{R}$ and $f^{i}:\mathbf{S}\rightarrow \mathbb{R}$ associated to each player $i\in \{1,...,N\}$ and a
probability transition function $q:\mathbf{S}\rightarrow \mathbb{[}0,1%
\mathbb{]}$.

The game is played in discrete time as follows. At each stage $t=1,2,...$, if the game has not been absorbed before that day, each player $i$ chooses (simultaneously) at
random an action $s_{t}^{i}\in S^{i}$. If the profile $s_t=(s_{t}^{1},...,s_{t}^{N})$ is chosen, then:

(i) the payoff of player $i$ is $g^{i}(s_t)$ at stage $t$.

(ii) with probability $1-q(s_t) $ the game
is terminated (absorbed) and each player $i$ gets at every stage $s>t$ the
payoff $f^{i}(s_t),$ and

(iii) with probability $q(s_t)$ the game
continues (the situation is repeated at stage $t+1$).

Consider the $\lambda $-discounted game $G\left( \lambda \right) $ ($%
0<\lambda <1$). If the payoff of player $i$ at stage $t$ is $r^{i}(t)$ then
its $\lambda $-discounted payoff in the game is $\sum_{t=1}^{\infty }\lambda
(1-\lambda )^{t-1}r^{i}(t)$. Hence, a player is optimizing his expected $%
\lambda $-discounted payoff.

Let $\widetilde{g}^{i}=g^{i}\times q$ and $\widetilde{f}^{i}=f^{i}\times
(1-q)$ and extend $\widetilde{g}^{i},$ $\widetilde{f}^{i}$ and $q$
multilinearly to $\mathbf{\Delta }$ (as above in Nash and Loomis games).

A profile $p\in \Delta $ is a stationary equilibrium of the
absorbing game if (1) each player $i$ plays iid at each stage $t$ the mixed strategy $p^i$ until the game is absorbed and (2) this is optimal for him in the discounted absorbing game if the other players do not deviate.
\begin{lemma}
\label{coroabsorb} A profile $p_{0}\in \Delta $ is a stationary equilibrium of the
absorbing game if and only if it is a Nash equilibrium of the Loomis game with
payoff functions $p\rightarrow \frac{\lambda \widetilde{g}^{i}(p)+(1-\lambda
)\widetilde{f}^{i}(p)}{\lambda q(p)+(1-q(p))},$ $i=1,...,N$.
\end{lemma}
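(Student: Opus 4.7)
The plan is to reduce the stationary equilibrium condition of the absorbing game to the static Nash condition of a Loomis game whose payoffs equal the $\lambda$-discounted values. The argument has three steps.

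Step 1: I would obtain the closed-form expression for the $\lambda$-discounted value $V^{i}(p)$ of player $i$ when every player uses the stationary profile $p\in \mathbf{\Delta}$. Conditioning on the first-stage profile $s$ (of probability $\pi(s)=\prod_{j}p^{j}(s^{j})$) and using the stationarity of the continuation game, one writes a one-line Bellman-type recursion for $V^{i}(p)$. The discount weights $\lambda(1-\lambda)^{t-1}$ combined with the per-stage absorption/continuation dichotomy lead to a linear equation whose solution is
\[
V^{i}(p)\;=\;\frac{\lambda\,\widetilde{g}^{i}(p)+(1-\lambda)\,\widetilde{f}^{i}(p)}{\lambda\,q(p)+(1-q(p))}.
\]
The denominator equals $1-(1-\lambda)q(p)\geq \lambda>0$, so each $h^{i}(p):=V^{i}(p)$ is a well-defined continuous ratio of multilinear functions on $\mathbf{\Delta}$ and fits the Loomis template of the previous subsection.

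Step 2: Invoke the standard Shapley/Bellman fact for discounted stochastic games: once the opponents' stationary profile $p_{0}^{-i}$ is held fixed, player $i$'s maximization becomes a finite Markov decision process whose supremum over all behavior strategies is attained at some stationary (indeed, pure Markov) strategy. Hence, for each $i$, the inequality $V^{i}(p_{0}^{i},p_{0}^{-i})\geq V^{i}(\sigma^{i},p_{0}^{-i})$ for every behavior strategy $\sigma^{i}$ is equivalent to $V^{i}(p_{0}^{i},p_{0}^{-i})\geq V^{i}(p^{i},p_{0}^{-i})$ for every stationary deviation $p^{i}\in \Delta^{i}$.

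Step 3: Combining Steps 1 and 2, $p_{0}\in \mathbf{\Delta}$ is a stationary equilibrium of the absorbing game if and only if $h^{i}(p_{0}^{i},p_{0}^{-i})\geq h^{i}(p^{i},p_{0}^{-i})$ for every player $i$ and every $p^{i}\in \Delta^{i}$, which is verbatim the Nash-equilibrium condition for the Loomis game with payoffs $h^{i}$. Both implications of the lemma then follow at once, and existence for the stationary equilibrium is subsumed by the Loomis-game lemma.

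The main obstacle is Step 1: setting up the stage-by-stage conditioning so that the multilinear extensions $\widetilde{g}^{i}$, $\widetilde{f}^{i}$, and $q$ appear in the exact combination required. The convention adopted in \cite{Laraki} for when the stage reward $g^{i}$ is effectively received relative to the absorption draw is what distinguishes the numerator $\lambda\widetilde{g}^{i}(p)+(1-\lambda)\widetilde{f}^{i}(p)$ from nearby but inequivalent formulas; once this convention is pinned down, the remaining algebra is a one-shot inversion of a scalar linear equation in $V^{i}(p)$, and Steps 2--3 are essentially tautological.
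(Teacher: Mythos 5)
Your proposal is correct in outline, but it is worth noting that the paper does not actually prove this lemma at all: its ``proof'' is the single line ``See Laraki \cite{Laraki}.'' So you have reconstructed the argument that the authors delegate to the reference, and your reconstruction is the standard one: (1) solve the one-state Bellman recursion to get the closed form for the stationary discounted value, (2) invoke the fact that against a fixed stationary profile of the opponents player $i$ faces a discounted MDP with a single non-absorbing state, so optimality over all behavior strategies reduces to optimality over the mixed actions $p^{i}\in\Delta^{i}$, and (3) observe that the resulting condition is verbatim the Nash condition of the Loomis game with payoffs $h^{i}=V^{i}$. Steps 2 and 3 are sound. What your write-up buys that the paper does not provide is an actual self-contained argument; what it costs is that the correctness of Step 1 hinges entirely on the payoff convention, exactly as you flag.

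On that point, one concrete remark: if you carry out Step 1 with the game as literally described in this paper --- item (i) says player $i$ receives $g^{i}(s_{t})$ at stage $t$ unconditionally, and item (ii) says absorption pays $f^{i}(s_{t})$ at stages $s>t$ --- the recursion gives
\begin{equation*}
V^{i}(p)\,=\,\frac{\lambda\,g^{i}(p)+(1-\lambda)\,\widetilde{f}^{i}(p)}{\lambda q(p)+(1-q(p))},
\end{equation*}
with $\lambda\,g^{i}(p)$ rather than $\lambda\,\widetilde{g}^{i}(p)=\lambda\,(g^{i}q)(p)$ in the numerator. The formula in the lemma corresponds to the convention (taken from \cite{Laraki}) in which the stage reward $g^{i}(s_{t})$ is collected only on the continuation event, i.e.\ with probability $q(s_{t})$. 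So your ``main obstacle'' is real and is in fact a small inconsistency between the paper's verbal description of the game and the stated formula; your proof goes through verbatim once the convention is fixed to match \cite{Laraki}, and this does not affect the equivalence asserted by the lemma, only which rational function one writes down.
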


\begin{proof}
See Laraki \cite{Laraki}.
\end{proof}

As shown in \cite{Laraki}, the min-max of a discounted absorbing game
satisfies:
\begin{equation*}
\underline{v}_{i}=\min_{p^{-i}\in \mathbf{\Delta }^{-i}}\max_{s^{i}\in S^{i}}%
\frac{\lambda \widetilde{g}^{i}(s^{i},p^{-i})+(1-\lambda )\widetilde{f}%
^{i}(s^{i},p^{-i})}{\lambda q(s^{i},p^{-i})+(1-q(s^{i},p^{-i}))}.
\end{equation*}
Hence solving a finite absorbing game is equivalent to solving a Loomis game
(hence a particular instance of the $\mrf$ problem (\ref{defp})) which again can be solved via the hierarchy of semidefinite relaxations (\ref{lower}). Again one has to minimize the supremum
of rational functions.

\section{Zero-sum polynomial games}

Let $\mathbf{K}_{1}\subset\R^{n_1}$ and $\mathbf{K}_{2}\subset \mathbb{R}^{n_2}$ be two basic and
closed semi-algebraic sets (not necessarily with same dimension):
\begin{eqnarray}
\mathbf{K}_{1}:= &\{x\in &\mathbb{R}^{n_1}:g_{j}(x)\geq 0,\quad j=1,\ldots
,m_{1}\}  \label{setdelta2} \\
\mathbf{K}_{2}:= &\{x\in &\mathbb{R}^{n_2}:h_{k}(x)\geq 0,\quad k=1,\ldots
,m_{2}\}
\end{eqnarray}%
for some polynomials $(g_{j})\subset \mathbb{R}[x_1,\ldots x_{n_1}]$
and $(h_{k})\subset \mathbb{R}[x_1,\ldots x_{n_2}]$.

Let $P(\mathbf{K}_{i})$ be the set of Borel probability measures on $\mathbf{%
K}_{i}$, $i=1,2$, and consider the following min-max problem:
\begin{equation}
\P:\quad J^{\ast }\,=\,\min_{\mu \in P(\mathbf{K}_{1})}\,\max_{\nu
\in P(\mathbf{K}_{2})}\,\displaystyle\int_{\K_2} \int_{\K_1} p(x,z)\,d\mu (x)\,d\nu (z)
\label{defp1}
\end{equation}%
for some polynomial $p\in \mathbb{R}[x,z]$.

If $\mathbf{K}_{1}$ and $\mathbf{K}_{2}$ are compact, it is well-known that
\begin{eqnarray}
J^{\ast } &=&\min_{\mu \in P(\mathbf{K}_{1})}\,\max_{\nu \in P(\mathbf{K}%
_{2})}\,\displaystyle\int_{\K_2} \int_{\K_1} p(x,z)\,d\mu (x)\,d\nu (z)  \label{defp2} \\
&=&\max_{\nu \in P(\mathbf{K}_{2})}\,\min_{\mu \in P(\mathbf{K}_{1})}\,%
\displaystyle\int_{\K_1} \int_{\K_2} p(x,z)\,d\nu (z)\,d\mu (x),  \notag
\end{eqnarray}%
that is, there exist $\mu ^{\ast }\in P(\mathbf{K}_{1})$ and $\nu ^{\ast
}\in P(\mathbf{K}_{2})$ such that:
\begin{equation}
J^{\ast }\,=\,\displaystyle\int_{\K_2} \int_{\K_1} p(x,z)\,d\mu ^{\ast }(x)\,d\nu ^{\ast
}(z).  \label{attained}
\end{equation}%
The probability measures $\mu ^{\ast }$ and $\nu ^{\ast }$ are the optimal
strategies of players $1$ and $2$ respectively.

For the univariate case $n=1$, Parrilo \cite{Parrilo} showed that
$J^*$ is the optimal value of a {\it single} semidefinite program, namely
the semidefinite program (7) in \cite[p. 2858]{Parrilo}, and mentioned how to extract optimal strategies since there exist optimal strategies $(\mu^*,\nu^*)$ with finite support.
In \cite{Parrilo} the author mentions that extension to the multivariate case is possible.
We provide below such an extension which, in view of the proof of its validity
given below, is non trivial. The price to pay for this extension is to replace a single
semidefinite program with a hierarchy of semidefinite programs of increasing size.
But contrary to the polynomial optimization case in  e.g. \cite{lasserre1}, proving convergence of this hierarchy is more delicate because one has
(simultaneously in the same SDP) moment matrices of increasing size
and an s.o.s.-representation of some polynomial in Putinar's form (\ref{putrep}) with increasing degree bounds for the s.o.s. weights. In particular, the convergence is not monotone anymore.
When we do $n=1$ in this extension,
one retrieves the original result of Parrilo \cite{Parrilo}, i.e.,
the first semidefinite program in the hierarchy (\ref{sdp11}) reduces to
(7) in \cite[p. 2858]{Parrilo} and provides us with the exact desired value.

\subsection*{Semidefinite relaxations for solving $\P$}

With $p\in\mathbb{R}[x,z]$ as in (\ref{defp}), write
\begin{eqnarray}  \label{decomp}
p(x,z)&=&\sum_{\alpha\in\mathbb{N}^{n_2}}p_\alpha(x)\,z^\alpha\qquad%
\mbox{with} \\
p_\alpha(x)&=&\sum_{\beta\in\mathbb{N}^{n_1}}p_{\alpha\beta}\,x^\beta,\qquad
\vert\alpha\vert\,\leq\,d_z  \notag
\end{eqnarray}
where $d_z$ is the total degree of $p$ when seen as polynomial in $\mathbb{R}%
[z]$. So, let $p_{\alpha\beta}:=0$ for every $\beta\in\mathbb{N}^{n_1}$
whenever $\vert\alpha\vert > d_z$.\newline

Let $r_j:=\lceil\mathrm{deg}\,g_j/2\rceil$, for every $j=1,\ldots,m_1$, and
consider the following semidefinite program:

\begin{equation}
\left\{
\begin{array}{ll}
\displaystyle\min_{\mathbf{y},\lambda ,Z^{k}} & \lambda \\
\mathrm{s.t.} & \lambda \,I_{\alpha =0}-\displaystyle\sum_{\beta \in \mathbb{%
N}^{n_{1}}}p_{\alpha \beta }\,y_{\beta }\,=\,\langle Z^{0},B_{\alpha
}\rangle +\displaystyle\sum_{k=1}^{m_{2}}\langle Z^{k},B_{\alpha
}^{h_{k}}\rangle ,\quad |\alpha |\leq 2d \\
&  \\
& M_{d}(\mathbf{y})\,\succeq 0 \\
& M_{d-r_{j}}(g_{j},\mathbf{y})\,\succeq 0,\quad j=1,\ldots ,m_{1} \\
& y_{0}\,=1 \\
&  \\
& Z^{k}\,\succeq 0,\quad k=0,1,\ldots m_{2},%
\end{array}%
\right.  \label{sdp1}
\end{equation}%
where $\mathbf{y}$ is a finite sequence indexed in the canonical basis $%
(x^{\alpha })$ of $\mathbb{R}[x]_{2d}$. Denote by $\lambda _{d}^{\ast }$ its
optimal value. In fact, with $h_{0}\equiv 1$ and $p_\y\in
\mathbb{R}[z]$ defined by:
\begin{equation}
z\,\mapsto \,p_\y(z)\,:=\sum_{\alpha \in \mathbb{N}^{n_{2}}}\left(
\sum_{\beta \in \mathbb{N}^{n_{1}}}p_{\alpha \beta }\,y_{\beta }\right)
\,z^{\alpha },  \label{py}
\end{equation}%
the semidefinite program (\ref{sdp1}) has the equivalent formulation:
\begin{equation}
\left\{
\begin{array}{ll}
\displaystyle\min_{\mathbf{y},\lambda ,\sigma _{k}} & \lambda \\
\mathrm{s.t.} & \lambda \,-\,p_\y(\cdot )\,=\,\displaystyle%
\sum_{k=0}^{m_{2}}\sigma _{k}\,h_{k} \\
&  \\
& M_{d}(\mathbf{y})\,\succeq 0 \\
& M_{d-r_{j}}(g_{j},\mathbf{y})\,\succeq 0,\quad j=1,\ldots ,m_{1} \\
& y_{0}\,=1 \\
&  \\
& \sigma _{k}\in \Sigma \lbrack z];:\mathrm{deg}\,\sigma _{k}+\mathrm{deg}%
\,h_{k}\leq 2d,\quad k=0,1,\ldots ,m_{2},%
\end{array}%
\right.  \label{sdp11}
\end{equation}%
where the first constraint should be understood as an equality of polynomials. Observe that for any admissible solution $(\mathbf{y},\lambda )$ and $p_\y$ as in (\ref{py}),
\begin{equation}
\lambda \,\geq \max_{z}\{p_\y(z)\,:\,z\in \mathbf{K}_{2}\}.
\label{test2}
\end{equation}

Similarly, with $p$ as in (\ref{defp}), write
\begin{eqnarray}  \label{decomp2}
p(x,z)&=&\sum_{\alpha\in\mathbb{N}^{n_1}}\hat{p}_\alpha(z)\,x^\alpha\qquad%
\mbox{with} \\
\hat{p}_\alpha(z)&=&\sum_{\beta\in\mathbb{N}^{n_2}}\hat{p}%
_{\alpha\beta}\,z^\beta,\qquad \vert\alpha\vert\,\leq\,d_x  \notag
\end{eqnarray}
where $d_x$ is the total degree of $p$ when seen as polynomial in $\mathbb{R}%
[x]$. So, let $\hat{p}_{\alpha\beta}:=0$ for every $\beta\in\mathbb{N}^{n_2}$
whenever $\vert\alpha\vert > d_x$.\newline

Let $l_k:=\lceil\mathrm{deg}\,h_k/2\rceil$, for every $k=1,\ldots,m_2$, and
with
\begin{equation}  \label{newpy}
x\,\mapsto\,\hat{p}_\y(x)\,:=\sum_{\alpha\in\mathbb{N}%
^{n_1}}\left(\sum_{\beta\in\mathbb{N}^{n_2}}\hat{p}_{\alpha\beta}\,y_\beta%
\right)\,x^\alpha,
\end{equation}
consider the following semidefinite program (with $g_0\equiv 1$):

\begin{equation}  \label{sdp2}
\left\{%
\begin{array}{ll}
\displaystyle\max_{\mathbf{y},\gamma,\sigma_j} & \gamma \\
\mathrm{s.t.} & \hat{p}_\y(\cdot)-\gamma \,=\,\displaystyle%
\sum_{j=0}^{m_1}\sigma_j\,g_j \\
&  \\
& M_d(\mathbf{y})\,\succeq0 \\
& M_{d-l_k}(h_k,\mathbf{y})\,\succeq0,\quad k=1,\ldots,m_2 \\
& y_0\,=1 \\
&  \\
& \sigma_j\in\Sigma[x];\:\mathrm{deg}\,\sigma_j+\mathrm{deg}\,g_j\leq
2d,\quad j=0,1,\ldots, m_1.%
\end{array}%
\right.
\end{equation}
where $\mathbf{y}$ is a finite sequence indexed in the canonical basis $%
(z^\alpha)$ of $\mathbb{R}[z]_{2d}$. Denote by $\gamma^*_d$ its optimal
value. In fact, (\ref{sdp2}) is the dual of the semidefinite program (\ref%
{sdp1}).\newline

Observe that for any admissible solution $(\mathbf{y},\gamma )$ and $\hat{p}_\y$ as in (\ref{newpy}),
\begin{equation}
\gamma \,\leq \,\min_{x}\{\hat{p}_\y(x)\,:\,x\in \mathbf{K}_{1}\}.
\label{test}
\end{equation}

\begin{theorem}
\label{thmain2} Let $\P$ be the min-max problem defined in (\ref%
{defp}) and assume that both $\K_1$ and $\K_2$ are compact
and satisfy Putinar's property (see Definition \ref{defput}).
Let $\lambda _{d}^{\ast }$ and $%
\gamma _{d}^{\ast }$ be the optimal values of the semidefinite program (\ref%
{sdp11}) and (\ref{sdp2}) respectively. Then $\lambda _{d}^{\ast
}\rightarrow J^{\ast }$ and $\gamma _{d}^{\ast }\rightarrow J^{\ast }$ as $%
d\rightarrow \infty $.
\end{theorem}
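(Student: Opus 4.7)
The plan is to sandwich both $\lambda^{\ast}_d$ and $\gamma^{\ast}_d$ between $J^{\ast}$ from above and from below, using two constructive feasibility arguments based on Putinar's Positivstellensatz and weak duality between the two semidefinite programs. The key idea is that the min-player's and max-player's optimal strategies supply explicit feasible solutions whose objective values approach $J^{\ast}$, one giving an upper bound on $\lambda^{\ast}_d$ and the other a lower bound on $\gamma^{\ast}_d$.

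First I would show $\limsup_{d\to\infty}\lambda^{\ast}_d\le J^{\ast}$. Fix $\epsilon>0$, let $\mu^{\ast}\in P(\mathbf{K}_1)$ be an optimal min-player strategy in (\ref{defp2}), and set $\lambda:=J^{\ast}+\epsilon$. Take $\mathbf{y}$ to be the truncation to order $2d$ of the moment sequence of $\mu^{\ast}$; since $\mu^{\ast}$ is supported on $\mathbf{K}_1$, the conditions $y_0=1$, $M_d(\mathbf{y})\succeq 0$ and $M_{d-r_j}(g_j,\mathbf{y})\succeq 0$ are automatic. By construction $p_{\mathbf{y}}(z)=\int_{\mathbf{K}_1}p(x,z)\,d\mu^{\ast}(x)$, a fixed polynomial in $z$ (independent of $d$, once $2d$ exceeds the degree of $p$ in $x$), and the definition of $\mu^{\ast}$ together with (\ref{defp2}) implies $p_{\mathbf{y}}(z)\le J^{\ast}$ on $\mathbf{K}_2$. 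Hence $\lambda-p_{\mathbf{y}}\ge\epsilon>0$ on the compact set $\mathbf{K}_2$, which satisfies Putinar's property; Theorem \ref{thput}(a) then yields s.o.s. weights $(\sigma_k)\subset\Sigma[z]$ with $\lambda-p_{\mathbf{y}}=\sum_{k=0}^{m_2}\sigma_k h_k$. For all $d$ exceeding $\max_k\lceil(\deg\sigma_k+\deg h_k)/2\rceil$, the triple $(\mathbf{y},\lambda,(\sigma_k))$ is feasible for (\ref{sdp11}), so $\lambda^{\ast}_d\le J^{\ast}+\epsilon$.

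By the symmetric argument applied to an optimal max-player strategy $\nu^{\ast}\in P(\mathbf{K}_2)$, taking $\mathbf{y}$ to be the truncation of its moment sequence and $\gamma:=J^{\ast}-\epsilon$, the polynomial $\hat p_{\mathbf{y}}(x)-\gamma=\int_{\mathbf{K}_2}p(x,z)\,d\nu^{\ast}(z)-J^{\ast}+\epsilon$ is $\ge\epsilon>0$ on $\mathbf{K}_1$, and another application of Theorem \ref{thput}(a) produces a Putinar representation $\hat p_{\mathbf{y}}-\gamma=\sum_{j=0}^{m_1}\sigma_j g_j$ with bounded degrees. Thus $(\mathbf{y},\gamma,(\sigma_j))$ is feasible for (\ref{sdp2}) once $d$ is large enough, yielding $\liminf_{d\to\infty}\gamma^{\ast}_d\ge J^{\ast}-\epsilon$. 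Finally, since (\ref{sdp2}) is the standard Lagrangian dual of (\ref{sdp1}), weak duality gives $\gamma^{\ast}_d\le\lambda^{\ast}_d$ for every $d$, so
\[
J^{\ast}-\epsilon\,\le\,\liminf_{d}\gamma^{\ast}_d\,\le\,\liminf_d\lambda^{\ast}_d\,\le\,\limsup_d\lambda^{\ast}_d\,\le\,J^{\ast}+\epsilon,
\]
and letting $\epsilon\downarrow 0$ forces $\lambda^{\ast}_d\to J^{\ast}$; the squeeze $\limsup_d\gamma^{\ast}_d\le\limsup_d\lambda^{\ast}_d\le J^{\ast}$ then gives $\gamma^{\ast}_d\to J^{\ast}$ as well.

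The main subtlety, as flagged in the paper, is that unlike standard Lasserre hierarchies the sequence $(\lambda^{\ast}_d)$ is not monotone: the constraint $\lambda-p_{\mathbf{y}}=\sum_k\sigma_k h_k$ couples the moment sequence $\mathbf{y}$ (of growing length) with an s.o.s. certificate (of growing allowable degree) in a single SDP, so it is not evident that feasibility at level $d$ implies feasibility at level $d+1$. The strategy above bypasses monotonicity entirely: both the upper and lower bounds arise from explicit feasibility constructions at a sufficiently high, fixed level $d$, and weak duality supplies the missing lower bound for $\lambda^{\ast}_d$ without requiring a delicate moment-convergence analysis of an optimal primal sequence.
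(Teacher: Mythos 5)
Your proof is correct, and for the ``hard half'' of the argument it takes a genuinely different route from the paper's. The upper bound $\limsup_d\lambda^*_d\le J^*$ via the truncated moment sequence of $\mu^*$ and Putinar's Positivstellensatz is exactly the paper's first step. For the matching lower bound, however, the paper does not invoke duality: it takes a near-optimal feasible sequence $(\mathbf{y}^d,\lambda_d)$ of (\ref{sdp11}), extracts a pointwise-convergent subsequence (Lemma \ref{aux}), applies the moment-problem direction of Putinar's theorem (Theorem \ref{thput}(b)) to obtain a representing measure $\hat\mu\in P(\mathbf{K}_1)$ for the limit sequence, and then uses (\ref{test2}) together with coefficientwise convergence of $p_{\mathbf{y}^{d_i}}$ to $p_{\hat{\mathbf{y}}}$ to get $\liminf_i\lambda^*_{d_i}\ge J^*$, finishing with an arbitrary-subsequence argument; the statement for $\gamma^*_d$ is then only asserted ``by similar arguments.'' You instead run the symmetric feasibility construction on $\nu^*$ to get $\liminf_d\gamma^*_d\ge J^*-\epsilon$ and close the sandwich with weak duality $\gamma^*_d\le\lambda^*_d$, which treats both hierarchies simultaneously and avoids the subsequence extraction and the appeal to Theorem \ref{thput}(b) entirely --- arguably a cleaner argument. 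The one point you should not leave implicit is the weak-duality inequality itself: the paper asserts that (\ref{sdp2}) is the dual of (\ref{sdp1}) but does not verify it, so add the short check. Namely, for feasible $(\mathbf{y},\lambda,\sigma_k)$ of (\ref{sdp11}) and $(\mathbf{y}',\gamma,\sigma_j)$ of (\ref{sdp2}), apply $L_{\mathbf{y}'}$ to $\lambda-p_{\mathbf{y}}=\sum_k\sigma_k h_k$ and $L_{\mathbf{y}}$ to $\hat p_{\mathbf{y}'}-\gamma=\sum_j\sigma_j g_j$; the degree bounds and the localizing-matrix constraints make $L_{\mathbf{y}'}(\sigma_k h_k)\ge0$ and $L_{\mathbf{y}}(\sigma_j g_j)\ge0$, and since $L_{\mathbf{y}'}(p_{\mathbf{y}})=L_{\mathbf{y}}(\hat p_{\mathbf{y}'})=\sum_{\alpha,\beta}p_{\alpha\beta}y_\beta y'_\alpha$, one gets $\gamma\le L_{\mathbf{y}}(\hat p_{\mathbf{y}'})=L_{\mathbf{y}'}(p_{\mathbf{y}})\le\lambda$. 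What the paper's heavier compactness argument buys in exchange is a limit object $\hat\mu$ that is itself an optimal strategy for player 1, which your route does not produce.
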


We also have a test to detect whether finite convergence has occurred.

\begin{theorem}
\label{th2} Let $\P$ be the min-max problem defined in (\ref{defp}).

Let $\lambda^*_d$ be the optimal value of the semidefinite
program (\ref{sdp11}), and suppose that with $r:=\max_{j=1,\ldots,m_1}r_j$,
the condition
\begin{equation}  \label{th2-1}
\mathrm{rank}\,M_{d-r}(\mathbf{y})\,=\,\mathrm{rank}\,M_{d}(\mathbf{y}%
)\quad(=:\,s_1)
\end{equation}
holds at an optimal solution $(\mathbf{y},\lambda,\sigma_k)$ of (\ref{sdp11}).

Let $\gamma^*_t$ be the optimal value of the semidefinite
program (\ref{sdp2}), and suppose that with $r:=\max_{k=1,\ldots,m_2}l_k$,
the condition
\begin{equation}  \label{th22-1}
\mathrm{rank}\,M_{t-r}(\mathbf{y'})\,=\,\mathrm{rank}\,M_{t}(\mathbf{y'})\quad(=:\,s_2)
\end{equation}
holds at an optimal solution $(\mathbf{\y'},\gamma,\sigma_j)$ of (\ref{sdp2}).

If $\lambda^*_d=\gamma^*_t$ then $\lambda^*_g=\gamma _{t}^{\ast }=J^{\ast }$
and an optimal strategy for player 1 (resp. player 2) is
a probability measure $\mu^*$ (resp. $\nu^*$) supported on $s_{1}$ points of $\mathbf{K}_{1}$
(resp. $s_{2}$ points of $\mathbf{K}_{2}$).
\end{theorem}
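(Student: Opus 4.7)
The plan is to turn each rank-conditioned optimum into an atomic probability measure via the Curto--Fialkow flat extension theorem, and then to sandwich $J^{\ast}$ between $\gamma^{\ast}_t$ and $\lambda^{\ast}_d$ using the primal--dual SOS certificates together with the minimax equality (\ref{defp2}).

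First, under (\ref{th2-1}) the data $M_d(\y)\succeq 0$, $M_{d-r_j}(g_j,\y)\succeq 0$, $y_0=1$ satisfy exactly the flat extension hypothesis already invoked in Theorem \ref{thmain}(b) (see e.g.\ \cite{lasserre22}), so $\y$ is the truncated moment sequence of a unique $s_1$-atomic probability measure $\mu^{\ast}$ supported on $\K_1$. The same argument applied to $\y'$ under (\ref{th22-1}) produces a unique $s_2$-atomic probability measure $\nu^{\ast}$ supported on $\K_2$.

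Second, from the SOS identity $\lambda^{\ast}_d - p_\y = \sum_{k=0}^{m_2}\sigma_k h_k$ in (\ref{sdp11}) one immediately obtains $\lambda^{\ast}_d \geq p_\y(z)$ for every $z\in\K_2$. Because $\y$ has representing measure $\mu^{\ast}$, the definition (\ref{py}) gives $p_\y(z) = \int_{\K_1} p(x,z)\,d\mu^{\ast}(x)$, whence
\begin{equation*}
\lambda^{\ast}_d \;\geq\; \max_{z\in\K_2}\int_{\K_1} p(x,z)\,d\mu^{\ast}(x) \;\geq\; \min_{\mu\in P(\K_1)}\,\max_{z\in\K_2}\int_{\K_1} p(x,z)\,d\mu(x) \;=\; J^{\ast}.
\end{equation*}
Symmetrically, the SOS identity $\hat{p}_{\y'} - \gamma^{\ast}_t = \sum_{j=0}^{m_1}\sigma_j g_j$ in (\ref{sdp2}), combined with the minimax equality (\ref{defp2}), yields
\begin{equation*}
\gamma^{\ast}_t \;\leq\; \min_{x\in\K_1}\int_{\K_2} p(x,z)\,d\nu^{\ast}(z) \;\leq\; \max_{\nu\in P(\K_2)}\,\min_{x\in\K_1}\int_{\K_2} p(x,z)\,d\nu(z) \;=\; J^{\ast}.
\end{equation*}

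Combining the two bounds gives $\gamma^{\ast}_t \leq J^{\ast} \leq \lambda^{\ast}_d$, so the hypothesis $\lambda^{\ast}_d = \gamma^{\ast}_t$ forces both to equal $J^{\ast}$ and collapses the interior inequalities above to equalities. This is precisely the statement that $\mu^{\ast}$ attains the outer minimum and $\nu^{\ast}$ attains the outer maximum in $\P$, i.e. they are optimal strategies for players $1$ and $2$, supported on $s_1$ and $s_2$ points respectively. The only non-routine step is the invocation of the flat extension theorem to pass from a truncated moment sequence satisfying the flat rank condition to a genuine atomic measure; once this is available, the rest is a clean primal--dual sandwich that exploits the minimax structure of $\P$.
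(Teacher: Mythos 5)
Your proof is correct and follows essentially the same route as the paper: apply the Curto--Fialkow flat extension theorem to each rank-conditioned optimal solution to obtain the $s_1$- and $s_2$-atomic representing measures $\mu^*$ and $\nu^*$, then sandwich $J^*$ between $\gamma^*_t$ and $\lambda^*_d$ using the s.o.s.\ certificates (the bounds already recorded in (\ref{test2}) and (\ref{test})) together with the minimax equality (\ref{defp2}), so that the hypothesis $\lambda^*_d=\gamma^*_t$ collapses everything to equality and certifies optimality of $\mu^*$ and $\nu^*$. The only cosmetic difference is that the paper cites the chain of equalities from the proof of Theorem \ref{thmain2} rather than rederiving $\max_{z\in\K_2}p_{\y}(z)\leq\lambda^*_d$ directly from the s.o.s.\ identity, as you do.
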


For a proof the reader is referred to \S \ref{proof2}.

\begin{remark}
\label{univariate} {\rm In the univariate case, when $\mathbf{K}_{1},%
\mathbf{K}_{2}$ are (not necessarily bounded) intervals of the real line,
the optimal value $J^{\ast }=\lambda _{d}^{\ast }$ (resp. $J^{\ast }=\gamma
_{d}^{\ast }$) is obtained by solving the single semidefinite program (\ref%
{sdp11}) (resp. (\ref{sdp2})) with $d=d_{0}$, which is equivalent to (7) in Parrilo \cite[p. 2858]{Parrilo}. }
\end{remark}

\section{Zero-sum polynomial absorbing games}

As in the previous section, consider two compact basic semi-algebraic sets $%
\mathbf{K}_{1}\subset \mathbb{R}^{n_{1}}$, $\mathbf{K}_{2}\subset \mathbb{R}%
^{n_{2}}$ and polynomials $g,$ $f$ and $q:\mathbf{K}_{1}\times \mathbf{K}%
_{2}\rightarrow \left[ 0,1\right] $. Recall that $P(\mathbf{K}_{1})$ (resp. $%
P(\mathbf{K}_{2})$) denotes the set of probability measures on $\mathbf{K}%
_{1}$ (resp. $\mathbf{K}_{2}$). The absorbing game is played in discrete
time as follows. At stage $t=1,2,...$ player 1 chooses at random $x_{t}\in
\mathbf{K}_{1}$ (using some mixed action $\mu _{t}\in P\left( \mathbf{K}%
_{1}\right) )$ and, simultaneously, Player 2 chooses at random $y_{t}\in
\mathbf{K}_{2}$ (using some mixed action $\nu _{t}\in P\left( \mathbf{K}%
_{2}\right) $).

(i) player 1\ receives $g\left( x_{t},y_{t}\right) $ at stage $t$;

(ii) with probability $1-q\left( x_{t},y_{t}\right) $ the game is absorbed
and player 1 receives $f\left( x_{t},y_{t}\right) $ in all stages $s>t$;

and

(iii) with probability $q\left( x_{t},y_{t}\right) $ the game continues (the
situation is repeated at step $t+1$).

If the stream of payoffs is $r(t)$, $t=1,2,...,$ the $\lambda $%
-discounted-payoff of the game is $\sum_{t=1}^{\infty }\lambda (1-\lambda
)^{t-1}r(t)$.

Let $\widetilde{g}=g\times q$ and $\widetilde{f}=f\times (1-q)$ and extend $%
\widetilde{g},$ $\widetilde{f}$ and $q$ multilinearly to $P\left( \mathbf{K}%
_{1}\right) \times P\left( \mathbf{K}_{2}\right) $.

Player 1 maximizes the expected discounted-payoff and player 2 minimizes
that payoff. Using an extension of the Shapley operator \cite{Shapley} one
can deduce that the game has a value $v\left( \lambda \right) $ that
uniquely satisfies,
\begin{eqnarray*}
v\left( \lambda \right) &=&\max_{\mu \in P(\mathbf{K}_{1})}\min_{\nu \in P(%
\mathbf{K}_{2})}\int_{\Theta }\left( \lambda \widetilde{g}+(1-\lambda
)v(\lambda )p+(1-\lambda )\widetilde{f}\right) \,d\mu \otimes \nu \\
&=&\min_{\nu \in P(\mathbf{K}_{2})}\max_{\mu \in P(\mathbf{K}%
_{1})}\int_{\Theta }\left( \lambda \widetilde{g}+(1-\lambda )v(\lambda
)p+(1-\lambda )\widetilde{f}\right) \,d\mu \otimes \nu
\end{eqnarray*}%
with $\Theta :=\mathbf{K}_{1}\times \mathbf{K}_{2}$. As in the finite case, it may be shown
\cite{Laraki} that the problem may be reduced to a zero-sum Loomis game, that is:

\begin{equation}
v\left( \lambda \right) \,=\,\max_{\mu \in P(\mathbf{K}_{1})}\min_{\nu \in P(%
\mathbf{K}_{2})}\frac{\int_{\Theta }P\,d\mu \otimes \nu }{\int_{\Theta
}Q\,d\mu \otimes \nu }\,=\,\min_{\nu \in P(\mathbf{K}_{2})}\max_{\mu \in P(%
\mathbf{K}_{1})}\frac{\int_{\Theta }P\,d\mu \otimes \nu }{\int_{\Theta
}Q\,d\mu \otimes \nu }
\end{equation}

where%
\begin{eqnarray*}
(x,y)\mapsto P(x,y) &:=&\lambda \widetilde{g}(x,y)+(1-\lambda )\widetilde{f}%
(x,y) \\
(x,y)\mapsto Q(x,y) &:=&\lambda q(x,y)+1-q(x,y)
\end{eqnarray*}

Or equivalently, as it was originally presented by Loomis \cite{Loomis}, $v\left( \lambda \right) $ is the unique real $t$ such that%
\begin{eqnarray*}
0 &=&\max_{\mu \in P(\mathbf{K}_{1})}\min_{\nu \in P(\mathbf{K}_{2})}\left[
\int_{\Theta }(P(x,y)-t\,Q(x,y))\,d\mu (x)d\nu (y)\right] \\
&=&\min_{\nu \in P(\mathbf{K}_{2})}\min_{\nu \in P(\mathbf{K}_{1})}\left[
\int_{\Theta }(P(x,y)-t\,Q(x,y))\,d\mu (x)d\nu (y)\right] .
\end{eqnarray*}

Actually, the function $s:\mathbb{R}\rightarrow \mathbb{R}$ defined by:
\begin{equation*}
t\rightarrow s(t):=\max_{\mu \in P(\mathbf{K}_{1})}\min_{\nu \in P(\mathbf{K}%
_{2})}\left[ \int_{\Theta }(P(x,y)-tQ(x,y))\,d\mu (x)d\nu (y)\right]
\end{equation*}%
is continuous, strictly decreasing from $+\infty $ to $-\infty $ as
$t$ increases in $(-\infty,+\infty)$.

In the univariate case, if $\mathbf{K}_{1}$ and $\mathbf{K}_{2}$ are both
real intervals (not necessarily compact), then evaluating $s(t)$ for some
fixed $t$ can be done by solving a \textit{single} semidefinite program; see
Remark \ref{univariate}. Therefore, in this case, one may approximate the
optimal value $s^{\ast }\,(=s(t^{\ast }))$ of the game by binary search on $t$
and so, the problem can be solved in a polynomial time. This extends Shah
and Parrilo \cite{Shah}.

\section{Conclusion}

We have proposed a common methodology to approximate the optimal value of
games in two different contexts. The first algorithm, intended to compute
(or approximate) Nash equilibria in mixed strategies for static finite games
or dynamic absorbing games, is based on a hierarchy of semidefinite programs
to approximate the supremum of finitely many rational functions on a compact
basic semi-algebraic set.  Actually this latter formulation is also of self-interest
in optimization.
The second algorithm, intended to approximate the
optimal value of polynomial games whose action sets are compact basic
semi-algebraic sets, is also based on a hierarchy of semidefinite programs.
Not surprisingly, as the latter algorithm comes from a min-max problem over
sets of measures, it is a subtle combination of moment and s.o.s. constraints
whereas in polynomial optimization it is entirely formulated either with
moments (primal formulation) or with s.o.s. (dual formulation). Hence the above
methodology illustrates the power of the combined moment-s.o.s. approach.
A natural open question arises: how to
adapt the second algorithm to compute Nash equilibria of a non-zero-sum
polynomial game?

\section{Appendix}
\label{proof2}

\subsection{Proof of Theorem \protect\ref{thmain2}}

We first need the following partial result.

\begin{lemma}
\label{aux} Let $(\mathbf{y}^d)_d$ be a sequence of admissible solutions of
the semidefinite program (\ref{sdp1}). Then there exists $\hat{\mathbf{%
\mathbf{y}}}\in\mathbb{R}^\infty $ and a subsequence $(d_i)$ such that $%
\mathbf{\mathbf{y}}^{d_i}\to \hat{\mathbf{\mathbf{y}}}$ pointwise as $%
i\to\infty$, that is,
\begin{equation}  \label{pointwise}
\lim_{i\to\infty}\,y^{d_i}_\alpha\,=\,\hat{y}_\alpha,\qquad\forall \alpha\in%
\mathbb{N}^n.
\end{equation}
\end{lemma}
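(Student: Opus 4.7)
My plan is to first establish that for each multi-index $\alpha$ the scalars $(y^d_\alpha)_d$ are uniformly bounded (in $d$, for all $d$ large enough that $y^d_\alpha$ is defined), and then extract a pointwise-convergent subsequence by a Cantor diagonal argument.

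For the uniform bound, I would exploit Putinar's property of $\mathbf{K}_1$ applied to the admissibility constraints $M_d(\mathbf{y}^d)\succeq 0$ and $M_{d-r_j}(g_j,\mathbf{y}^d)\succeq 0$, with $y^d_0=1$. By Putinar's property, some quadratic $u(x)=M-\|x\|^2$ admits a representation $u=\sigma_0+\sum_{j=1}^{m_1}\sigma_j g_j$ with s.o.s. weights $\sigma_j$. For $d$ large enough so that $\deg(\sigma_j g_j)\le 2d$, positivity of the moment and localizing matrices yields $L_{\mathbf{y}^d}(\sigma_0)\ge 0$ and $L_{\mathbf{y}^d}(\sigma_j g_j)\ge 0$, hence
\[
L_{\mathbf{y}^d}\!\left(M-\|x\|^2\right)\,\ge\,0,\qquad\text{i.e.,}\qquad \sum_{i=1}^{n_1} y^d_{2e_i}\,\le\,M.
\]
Iterating this argument with the polynomials $u(x)^k = M^k-(\ldots)$ (or, equivalently, multiplying the Putinar representation of $u$ by monomials $x^{2\beta}$ and taking $L_{\mathbf{y}^d}$) one obtains uniform bounds on all even diagonal moments $y^d_{2\beta}\le M^{|\beta|}$; this is the standard a priori moment-bound argument from \cite{lasserre1}. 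Then Cauchy--Schwarz applied to the positive semidefinite bilinear form associated with $M_d(\mathbf{y}^d)$ gives
\[
|y^d_{\alpha+\beta}|\,=\,|M_d(\mathbf{y}^d)(\alpha,\beta)|\,\le\,\sqrt{y^d_{2\alpha}\,y^d_{2\beta}}\,\le\,M^{(|\alpha|+|\beta|)/2},
\]
so every coordinate $y^d_\gamma$ is bounded independently of $d$.

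Given such uniform bounds, for each fixed $\gamma\in\mathbb{N}^{n_1}$ the sequence $(y^d_\gamma)_d$ lies in a compact interval of $\mathbb{R}$. Enumerating $\mathbb{N}^{n_1}=\{\gamma_1,\gamma_2,\ldots\}$, I extract nested subsequences: a subsequence along which $y^d_{\gamma_1}$ converges, then a sub-subsequence along which $y^d_{\gamma_2}$ converges, and so on. The diagonal subsequence $(d_i)$ then satisfies $y^{d_i}_{\gamma}\to\hat y_\gamma$ for every $\gamma$, which yields the desired $\hat{\mathbf{y}}\in\mathbb{R}^{\mathbb{N}^{n_1}}$ and (\ref{pointwise}).

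The main obstacle is the a priori moment bound: one needs the Putinar representation of $M-\|x\|^2$ to hold with fixed s.o.s. weights independent of $d$, and then to deploy it at every even level $2\beta$. This is technical but standard, and it is exactly where Putinar's property on $\mathbf{K}_1$ is used. Once the bounds are in place, the diagonal extraction is routine.
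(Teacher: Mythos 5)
Your proof is correct and is exactly the standard moment-bound-plus-diagonal-extraction argument that the paper invokes: the authors omit the proof of Lemma \ref{aux}, saying only that it follows along the same lines as earlier convergence proofs from the constraints $y_0^d=1$, $M_d(\mathbf{y}^d)\succeq 0$, $M_{d-r_j}(g_j,\mathbf{y}^d)\succeq 0$. Your write-up supplies precisely the details (Putinar-based bounds $y^d_{2\beta}\le M^{|\beta|}$ on the diagonal moments, Cauchy--Schwarz for the off-diagonal entries, Cantor diagonalization) that the paper delegates to the cited references.
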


The proof is omitted because it is exactly along the same lines as the proof
of Theorem \ref{thmain} as among the constraints of the feasible set, one
has
\begin{equation*}
y^d_0=1,\quad M_{d}(\mathbf{y}^d)\succeq0,\quad M_d(g_j,\,\mathbf{y}%
^d)\succeq0,\:j=1,\ldots,m_1.
\end{equation*}

\subsubsection*{Proof of Theorem \protect\ref{thmain2}}

Let $\mu ^{\ast }\in P(\mathbf{K}_{1}),\nu ^{\ast }\in P(\mathbf{K}_{2})$ be
optimal strategies of player 1 and player 2 respectively, and let $\mathbf{y}%
^{\ast }=(y_{\alpha }^{\ast })$ be the sequence of moments of $\mu ^{\ast }$
(well-defined because $\mathbf{K}_{1}$ is compact). Then
\begin{eqnarray*}
J^{\ast } &=&\max_{\nu \in P(\mathbf{K}_{2})}\int_{\K_2} \left( \int_{\K_1} p(x,z)d\mu
^{\ast }(x)\right) \,d\nu (z) \\
&=&\max_{\nu \in P(\mathbf{K}_{2})}\int_{\K_2} \sum_{\alpha \in \mathbb{N}%
^{n_2}}\left( \sum_{\beta \in \mathbb{N}^{n_1}}p_{\alpha \beta }\int_{\K_1}x^{\beta
}\,d\mu ^{\ast }(x)\right) \,z^{\alpha }d\nu (z) \\
&=&\max_{\nu \in P(\mathbf{K}_{2})}\int_{\K_2} \sum_{\alpha \in \mathbb{N}%
^{n_2}}\left( \sum_{\beta \in \mathbb{N}^{n_1}}p_{\alpha \beta }y_{\alpha \beta
}^{\ast }\right) \,z^{\alpha }d\nu (z) \\
&=&\max_{\nu \in P(\mathbf{K}_{2})}\int_{\K_2} p_{\y^{\ast}}(z)\,d\nu (z) \\
&=&\max_{z}\:\{p_{\y^{\ast }}(z)\,:\,z\in \mathbf{K}_{2}\} \\
&=&\min_{\lambda ,\sigma _{k}}\{\lambda :\lambda -p_{\y^{\ast}}(\cdot
)\,=\,\sigma _{0}+\sum_{k=1}^{m_{2}}\sigma _{k}\,h_{k};\quad (\sigma
_{j})_{j=0}^{m_{2}}\subset \Sigma \lbrack z]\}
\end{eqnarray*}%
with $z\mapsto \,p_{\y^{\ast}}(z)$ defined in (\ref{py}). Therefore,
with $\epsilon>0$ fixed arbitrary,
\begin{equation}  \label{putepsilon}
J^*-p_{\y^*}(\cdot)+\epsilon\,=\,\sigma_0^\epsilon+\sum_{k=1}^{m_2}%
\sigma_k^\epsilon\,h_k,
\end{equation}
for some polynomials $(\sigma_k^\epsilon)\subset\Sigma[z]$ of degree at most
$2d^1_\epsilon$. So $(\mathbf{y}^*,J^*+\epsilon,\sigma^\epsilon_k)$ is an
admissible solution for the semidefinite program (\ref{sdp11}) whenever $%
d\geq\max_jr_j$ and $d\geq d^1_\epsilon+\max_k l_k$, because
\begin{equation}  \label{deg}
2d\geq\mathrm{deg}\,\sigma^\epsilon_0\,;\quad 2d\geq\mathrm{deg}%
\,\sigma^\epsilon_k+\mathrm{deg}\,h_k,\quad k=1,\ldots,m_2.
\end{equation}
Therefore,
\begin{equation}  \label{value}
\lambda^*_d\,\leq\,J^*+\epsilon,\qquad \forall d\geq\, \tilde{d}%
^1_\epsilon:=\max\left[\max_jr_j,\,d^1_\epsilon+\max_kl_k\,\right].
\end{equation}

Now, let $(\mathbf{y}^{d},\lambda _{d})$ be an admissible solution of the
semidefinite program (\ref{sdp11}) with value $\lambda _{d}\leq \lambda
_{d}^{\ast }+1/d$. By Lemma \ref{aux}, there exists $\hat{\mathbf{y}}\in
\mathbb{R}^{\infty }$ and a subsequence $(d_{i})$ such that $\mathbf{y}%
^{d_{i}}\rightarrow \hat{\mathbf{y}}$ pointwise, that is, (\ref{pointwise})
holds. But then, invoking (\ref{pointwise}) yields
\begin{equation*}
M_{d}(\hat{\mathbf{y}})\succeq 0\quad \mbox{and}\quad M_{d}(g_{j},\hat{%
\mathbf{y}})\succeq 0,\quad \forall j=1,\ldots ,m_{1};\quad d=0,1,\dots
\end{equation*}%
By Theorem \ref{thput}, there exists $\hat{\mu}\in P(\mathbf{K}_{1})$ such
that
\begin{equation*}
\hat{y}_{\alpha }\,=\,\int_{\K_1} x^{\alpha }\,d\hat{\mu},\qquad \forall \alpha \in
\mathbb{N}^{n}.
\end{equation*}%
On the other hand,
\begin{eqnarray*}
J^{\ast } &\leq &\max_{\nu \in P(\mathbf{K}_{2})}\int_{\K_2} \left( \int_{\K_1} p(x,z)d%
\hat{\mu}(x)\right) \,d\nu (z) \\
&=&\max_{z}\,\{p_{\hat{\y}}(z)\,:\,z\in \mathbf{K}_{2}\} \\
&=&\min \{\lambda :\lambda -p_{\hat{\y}}(\cdot )\,=\,\sigma
_{0}+\sum_{k=1}^{m_{2}}\sigma _{k}\,h_{k};\quad (\sigma
_{j})_{j=0}^{m_{2}}\subset \Sigma \lbrack z]\}
\end{eqnarray*}%
with
\begin{equation*}
z\,\mapsto \,p_{\hat{\y}}(z)\,:=\sum_{\alpha \in \mathbb{N}^{n}}\left(
\sum_{\beta \in \mathbb{N}^{n}}p_{\alpha \beta }\,\hat{y}_{\beta }\right)
\,z^{\alpha }.
\end{equation*}%
Next, let $\rho :=\max_{z\in \mathbf{K}_{2}}p_{\hat{\y}}(z)$ (hence $%
\rho\geq J^*$), and
consider the polynomial
\begin{equation*}
z\,\mapsto \,p_{\y^{d}}(z)\,:=\sum_{\alpha \in \mathbb{N}^{n}}\left(
\sum_{\beta \in \mathbb{N}^{n}}p_{\alpha \beta }\,y_{\beta }^{d}\right)
\,z^{\alpha }.
\end{equation*}%
It has same degree as $p_{\hat{\y}}$, and by (\ref{pointwise}%
), $\Vert p_{\hat{\y}}(\cdot )-p_{\y^{d_i}}(\cdot )\Vert
\rightarrow 0$ as $i\rightarrow \infty $.

Hence, $\max_{z\in \mathbf{K}_{2}}\,p_{\y^{d_i}}(z)\rightarrow \rho $
as $i\rightarrow \infty $, and by construction of the semidefinite program (%
\ref{sdp11}), $\lambda _{d_{i}}^{\ast }\geq \max_{z\in \mathbf{K}_{2}}\,
p_{\y^{d_i}}(z)$.

Therefore, $\lambda^*_{d_i}\geq \rho-\epsilon$ for all sufficiently large $i$
(say $d_i\geq d^2_\epsilon$) and so, $\lambda^*_{d_i}\geq J^*-\epsilon$ for
all $d_i\geq d^2_\epsilon$. This combined with $\lambda^*_{d_i}\leq
J^*+\epsilon$ for all $d_i\geq \tilde{d}^1_\epsilon$, yields the desired
result that $\lim_{i\to\infty}\lambda^*_{d_i}=J^*$ because $\epsilon>0$
fixed was arbitrary;

Finally, as the converging subsequence $(r_{i})$ was arbitrary, we get that
the entire sequence $(\lambda _{d}^{\ast })$ converges to $J^{\ast }$.
The convergence $\gamma^*_d\to J^{\ast}$ is proved with similar arguments. $\qed$

\subsection{Proof of Theorem \protect\ref{th2}}

By the flat extension theorem of Curto and Fialkow (see e.g. \cite{lasserre2}),
$\y$ has a representing $s_1$-atomic probability measure $\mu^*$ supported on $\K_1$
and similarly, $\y'$ has a representing $s_2$-atomic probability measure $\nu^*$ supported on $\K_2$.
But then from the proof of Theorem \ref{thmain2},
\begin{eqnarray*}
J^\ast&\leq &\max_{\psi\in P(\K_2)}\int_{\K_2}\left(\int_{\K_1} p(x,z)d\mu^*(x) \right)d\psi(z)\\
 &=&\max_{\psi\in P(\K_2)}\int_{\K_2} p_\y(z)d\psi(z)=\max_{z}\,\{\,p_\y(z)\,:\:z\in\K_2\}\leq \lambda^*_d.\\
J^\ast&\geq& \min_{\psi\in P(\K_1)}\int_{\K_1}\left(\int_{\K_2} p(x,z)d\nu^*(z)\right)d\psi(x)\\
&=&\min_{\psi\in P(\K_1)}\int_{\K_1}\hat{p}_{\y'}(x)d\psi(x)=\min_{x}\,\{\,\hat{p}_{\y'}(x)\,:\:x\in\K_1\}\geq\gamma^*_t,\end{eqnarray*}
and so as $\lambda^*_d=\gamma^*_t$ one has $J^*=\lambda^*_d=\gamma^*_t$. This in turn implies that $\mu^*$ (resp. $\nu^*$) is an optimal strategy for player $1$ (resp. player $2$). $\qed$
\section*{Acknowledgments}

This work was supported by french ANR-grant NT$05-3-41612$.

blabla
\end{document}